% !TeX spellcheck = en_US
\documentclass[12pt,a4paper]{amsart}
\usepackage{amscd,amsmath,amsthm,amssymb}
\usepackage[colorlinks=true, urlcolor=blue, linkcolor=blue, citecolor=blue]{hyperref}
%\usepackage{amsfonts,enumerate,verbatim}
%\usepackage[left]{lineno}
%\usepackage{pstcol,pst-plot,pst-3d}%\usepackage[T1]{fontenc}
%\usepackage[dvips]{graphicx}
%\usepackage{epstopdf}
%\usepackage{showkeys}
%\newpsstyle{fatline}{linewidth=1.5pt}
%\newpsstyle{fyp}{fillstyle=solid,fillcolor=verylight}
%\definecolor{verylight}{gray}{0.97}
%\definecolor{light}{gray}{0.9}
%\definecolor{medium}{gray}{0.85}
%\definecolor{dark}{gray}{0.6}
%
%------    GENERAL MACROS    -----
%
% Standard rings and fields, affine and projective space
%
\def\NZQ{\mathbb}               % the font for N,Z,Q,R,C

\def\KK{{\NZQ K}}

%------------------------------------------------
% Symbols in "Fraktur"
%
               % font for "Fraktur"

%------------------------------------------------
%

\def\Rc{{\mathcal R}}

%------------------------------------------------
% Small letters in bold
%

%%%%% figure %%%%%
\usepackage{tikz}
\usetikzlibrary{arrows.meta, positioning}
%%%%% figure %%%%%
\def\opn#1#2{\def#1{\operatorname{#2}}} % to make operators
	%------------------------------------------------
	% Numerical invariants of rings, ideals, and modules
	%
	\opn\chara{char} \opn\length{\ell} \opn\pd{pd} \opn\rk{rk}
	\opn\projdim{proj\,dim} \opn\injdim{inj\,dim} \opn\rank{rank}
	\opn\depth{depth} \opn\grade{grade} \opn\height{height}
	\opn\embdim{emb\,dim} \opn\codim{codim}
	\opn\Cl{Cl}
	
	\opn\Tr{Tr} \opn\bigrank{big\,rank}
	\opn\superheight{superheight}\opn\lcm{lcm}
	\opn\trdeg{tr\,deg}%\emph{
	\opn\rdeg{rdeg}
	\opn\reg{reg} \opn\lreg{lreg} \opn\ini{in} \opn\lpd{lpd}
	\opn\size{size} \opn\sdepth{sdepth}
	\opn\link{link}\opn\fdepth{fdepth}\opn\lex{lex}
	\opn\tr{tr}
	\opn\type{type}
	\opn\gap{gap}
	\opn\arithdeg{arith-deg}
	\opn\revlex{revlex}
	\opn\rev{rev}
	\opn\lex{lex}
	\DeclareMathOperator{\Gr}{Gr}
	%------------------------------------------------
	% Divisors
	%
	\opn\div{div} \opn\Div{Div} \opn\cl{cl} \opn\Cl{Cl}
	%------------------------------------------------
	% Subsets of the spectrum of a ring
	%
	\opn\Spec{Spec} \opn\Supp{Supp} \opn\supp{supp} \opn\Sing{Sing}
	\opn\Ass{Ass} \opn\Min{Min}\opn\Mon{Mon}
	%------------------------------------------------
	% Standard operations on ideals and modules
	%
	\opn\Ann{Ann} \opn\Rad{Rad} \opn\Soc{Soc}
	%------------------------------------------------
	% Linear algebra and homology, endo- and automorphisms
	%
	\opn\Im{Im} \opn\Ker{Ker} \opn\Coker{Coker} \opn\Am{Am}
	\opn\Hom{Hom} \opn\Tor{Tor} \opn\Ext{Ext} \opn\End{End}
	\opn\Aut{Aut} \opn\id{id}
	
	\opn\nat{nat}
	\opn\pff{pf}%   \pf exists already
	\opn\Pf{Pf} \opn\GL{GL} \opn\SL{SL} \opn\mod{mod} \opn\ord{ord}
	\opn\Gin{Gin} \opn\Hilb{Hilb}\opn\sort{sort}
	\opn\PF{PF}\opn\Ap{Ap}
	\opn\mult{mult}
	\opn\bight{bight}
	\opn\div{div}
	\opn\Div{Div}
	%------------------------------------------------
	% Convexity
	%
	\opn\aff{aff}
	\opn\relint{relint} \opn\st{st}
	\opn\lk{lk} \opn\cn{cn} \opn\core{core} \opn\vol{vol}  \opn\inp{inp} 
	\opn\nilpot{nilpot}
	\opn\link{link} \opn\star{star}\opn\lex{lex}\opn\set{set}
	\opn\width{wd}
	\opn\Fr{F}
	\opn\QF{QF}
	\opn\G{G}
	\opn\type{type}\opn\res{res}
	\opn\conv{conv}
	\opn\Int{Int}
	\opn\Deg{Deg}
	\opn\Sym{Sym}
	\opn\Con{Con}
	%------------------------------------------------
	% Graded rings and Rees algebras
	\opn\gr{gr}
	
	%------------------------------------------------
	% Polynomials and power series
	%
	
	\def\pot#1#2{#1[\kern-0.28ex[#2]\kern-0.28ex]}

	%------------------------------------------------
	% Direct and inverse limits
	%
	\opn\dirlim{\underrightarrow{\lim}}
	\opn\inivlim{\underleftarrow{\lim}}
	%------------------------------------------------
	% Names with a meaning
	%

	%------------------------------------------------
	%
	\let\to=\rightarrow
	
	\def\Implies{\ifmmode\Longrightarrow \else
		\unskip${}\Longrightarrow{}$\ignorespaces\fi}
	\def\implies{\ifmmode\Rightarrow \else
		\unskip${}\Rightarrow{}$\ignorespaces\fi}
	\def\iff{\ifmmode\Longleftrightarrow \else
		\unskip${}\Longleftrightarrow{}$\ignorespaces\fi}

	\let\:=\colon
	\newtheorem{Theorem}{Theorem}[section]
	\newtheorem{Lemma}[Theorem]{Lemma}
	\newtheorem{Corollary}[Theorem]{Corollary}
	\newtheorem{Proposition}[Theorem]{Proposition}
	\theoremstyle{definition}
	\newtheorem{Remark}[Theorem]{Remark}
	\newtheorem{Definition}[Theorem]{Definition}
	\newtheorem{Example}[Theorem]{Example}

	%
	% We like the var forms of some greek letters (as taught in German schools)	
	%
	\let\epsilon\varepsilon
	\let\kappa=\varkappa
	%
	%           We print on A4 paper
	%
	\textwidth=15cm \textheight=22cm \topmargin=0.5cm
	\oddsidemargin=0.5cm \evensidemargin=0.5cm % \pagestyle{plain}
	%
	% ------    END OF GENERAL MACROS    -------
	%
	\opn\dis{dis}
	\def\pnt{{\raise0.5mm\hbox{\large\bf.}}}
	
	\opn\Lex{Lex}
	%-- macro for local cohomology-----------------------------
	
	%-- macro for a complicated condition for the extended
	%-- Hochster's formula

    % \linenumbers

\begin{document}
\title[algebras with straightening laws]{ Elimination ideals of Pl\"ucker ideals and algebras with straightening laws}
\author[V.~Borovik]{Viktoriia Borovik}
\author[T.~Hibi]{Takayuki Hibi}
\address{(Viktoriia Borovik)
Max Planck Institute for Mathematics in the Sciences, Inselstrasse 22, 04103 Leipzig, Germany
}
\email{viktoriia.borovik@mis.mpg.de}
\address{(Takayuki Hibi) Department of Pure and Applied Mathematics, Graduate School of Information Science and Technology, Osaka University, Suita, Osaka 565--0871, Japan}
\email{hibi@math.sci.osaka-u.ac.jp}

\subjclass[2020]{13H10, 14M15, 06D05}

\keywords{Pl\"ucker ideal, elimination ideal, algebras with straightening laws, distributive lattice, interval graph, Gr\"obner basis}

%\thanks{The research for this paper was initiated while the second author stayed at Max Planck Institute for Mathematics in the Sciences, Leipzig, April 15 -- September 8, 2025. The first author acknowledges support from the European Research Council \begin{footnotesize} (UNIVERSE PLUS, 101118787). \end{footnotesize} $\!\!$ Views and opinions expressed are however those of the authors only and do not necessarily reflect those of the European Union or the European Research Council Executive Agency. Neither the European Union nor the granting authority can be held responsible for them.}

\begin{abstract}
It is well known that the Plücker ideal defining the Grassmannian is  generated by quadratic Plücker relations. These relations form a reverse lexicographic Gröbner basis and endow the Plücker algebra with the structure of an algebra with straightening laws (ASL). In this paper, we study quadratically gene\-rated projections of the Grassmannian of lines $\mathrm{Gr}(2,n)$. % determined by a simple graph on \(n\) vertices. 
We then combinatorially characterize the Gorenstein ASL subalgebras of the  Plücker algebra of $\mathrm{Gr}(2,n)$.
\end{abstract}

\maketitle
\thispagestyle{empty}

\section*{Introduction}
Binomials and monomials have long been central objects of study in combinatorics and commutative algebra.  In the early days, squarefree monomial ideals, the so-called Stanley-Reisner ideals \cite{Reisner76, Stanley75}, associated with simplicial complexes and with finite partially ordered sets, were thoroughly investigated. In the late 1990's, subalgebras of a polynomial ring over a field generated by monomials, and their defining ideals, the so-called toric ideals \cite{OH99, Sturmfels96}, became fashionable with the help of Gr\"obner bases.  %On the other hand, in the early  2010's, the study on powers of edge ideals and on binomial edge ideals \cite{HHZ04, H+10} quickly developed into a large subarea.   
In the early 2010's, the study of powers of edge ideals and of binomial edge ideals \cite{HHZ04, H+10} quickly developed into a large subarea.

In this paper, following classical works of Pl\"ucker \cite{Pluecker1, Pluecker2} and Grassmann \cite{Grassmann} and from the viewpoint of algebras with straightening laws \cite{Eisenbud80}, we study a subalgebra of the polynomial ring generated by binomials whose defining ideal is an elimination ideal (see, for example, \cite[Theorem 1.4.1]{GB2013}) of the Pl\"ucker ideal.

\smallskip

Let $A=\mathbb{K}[x_1,\ldots,x_n,y_1,\ldots,y_n]$ denote the polynomial ring in $2n$ variables over a field $\mathbb{K}$, and let
$L_n=\{\,p_{ij}: 1\le i<j\le n\,\}$ be the distributive lattice with partial order defined by $p_{ij}\le p_{k\ell}$ if $i\le k$ and $j\le \ell$. The homogeneous coordinate ring
$\Rc_{\mathbb{K}}[L_n]=\mathbb{K}[\,x_i y_j - x_j y_i: 1\le i<j\le n\,]$
of the Grassmannian $\Gr(2,n)$ is an algebra with straightening laws on $L_n$ over $\mathbb{K}$, via the identification $p_{ij}\mapsto x_i y_j - x_j y_i$. The Pl\"ucker ideal $I_{L_n}$, which is the defining ideal of $\Rc_{\mathbb{K}}[L_n]$ in the polynomial ring $\mathbb{K}[\,p_{ij}: 1\le i<j\le n\,]$, is generated by the quadratic Pl\"ucker relations
\begin{equation}\label{eq: Plucker_relations intro}
    Q_{ijk\ell}:=p_{i\ell}p_{jk}-p_{ik}p_{j\ell}+p_{ij}p_{k\ell}, 
\qquad 1\le i<j<k<\ell\le n.
\end{equation}
Our goal is to find a sublattice $L\subset L_n$ with the following properties:
\begin{itemize}
    \item[(i)] the subalgebra $\Rc_{\mathbb{K}}[L]\subset \Rc_{\mathbb{K}}[L_n]$ generated by those $p_{ij}=x_i y_j - x_j y_i$ with $p_{ij}\in L$ is an algebra with straightening laws on $L$ over $\mathbb{K}$;
\item[(ii)] the defining ideal $I_L$ of $\Rc_{\mathbb{K}}[L]$ in the polynomial ring $\mathbb{K}[\,p_{ij}: p_{ij}\in L\,]$ is an elimination ideal of $I_{L_n}$;
\item[(iii)] $\Rc_{\mathbb{K}}[L]$ is Gorenstein.
\end{itemize}

The outline of the present paper is as follows. In Section \ref{sec: 1}, we briefly review algebras with straightening laws. In Section \ref{sec: 2}, we introduce a specific lexicographic order $<_{\lex}$ for which the quadratic Plücker relations \eqref{eq: Plucker_relations intro} form a Gröbner basis of the Plücker ideal $I_{L_n}$. We then study distinguished sublattices $L$ of $L_n$ (\emph{compatible} and \emph{perfect compatible} sublattices)  such that the defining ideal $I_L$ of the subalgebra $\Rc_\mathbb{K}[L]$ of $\Rc_\mathbb{K}[L_n]$ is the elimination ideal $I_{L_n} \cap \mathbb{K}[p_{ij} : p_{ij} \in L]$ of the Pl\"ucker ideal.  

Although our primary objects are sublattices of $L_n$, the combinatorial aspects can be described elegantly via finite graphs. In Section~\ref{sec: 3}, to each sublattice $L \subseteq L_n$ we associate a finite graph $G_L$ on the vertex set $[n]=\{1,\ldots,n\}$, with edges $\{i,j\}$ precisely when $p_{ij} \in L$. Our discussion focuses on interval graphs. In Theorem~\ref{Th_Gorenstein}, we provide an effective criterion for $\Rc_{\mathbb{K}}[L]$ to be Gorenstein when $L$ is a compatible sublattice. Furthermore, using this criterion, we compute the number of perfect compatible sublattices $L \subseteq L_n$ for which $\Rc_{\mathbb{K}}[L]$ is Gorenstein (Corollary~\ref{enumeration_2}).

In the Appendix \ref{appendix}, we construct another lexicographic Gröbner basis of the Plücker ideal \(I_{L_n}\) consisting of quadrics \eqref{eq: Plucker_relations intro} and cubics. This yields an additional combinatorial interpretation of the Catalan numbers in terms of nested arc arrangements on \(n\) points on a line. Using this Gröbner basis, we also partially repeat  the elimination results from Section~\ref{sec: 2}.

\section{Algebras with straightening laws}\label{sec: 1}
Let $R = \bigoplus_{n=0}^{\infty} R_n$ be a noetherian graded algebra over a field ${\mathbb K}$.  Let $P$ be a finite poset (partially ordered set) and suppose we are given an injection
\[
\varphi: P \hookrightarrow \bigcup_{n=1}^{\infty} R_n
\]
such that $R$ is generated as a ${\mathbb K}$-algebra by $\varphi(P)$.  A {\em standard monomial} is a homogeneous element of $R$ of the form $\varphi(\gamma_1) \varphi(\gamma_2)\cdots \varphi(\gamma_n)$, where $\gamma_1 \leq \gamma_2 \leq \cdots \leq \gamma_n$ in~$P$ (with the empty product interpreted as $1$).  We say that $R$ is an \emph{algebra with straightening laws} \cite{Eisenbud80} on $P$ over $\mathbb{K}$  if the following conditions hold:
\begin{itemize}
\item[]
(ASL\,-1)
The set of standard monomials forms a $\mathbb{K}$-basis of $R$.
\item[]
(ASL\,-2)
If $\alpha$ and $\beta$  are incomparable in $P$ and if
\begin{eqnarray}
\label{ASL}
\, \, \, \, \, \, \, \, \, \, \, \, \, \, \, \, \, \, \, \, 
\varphi(\alpha)\varphi(\beta) 
= \sum_{i} r_i\,\varphi(\gamma_{i_1}) \varphi(\gamma_{i_2})\cdots \varphi(\gamma_{i_{n_i}}), \, \text{ with } 0 \neq r_i \in {\mathbb K}, \, \, \, \gamma_{i_1}\leq \gamma_{i_2} \leq \cdots 
\end{eqnarray}
is the unique expression for $\varphi(\alpha)\varphi(\beta) \in R$ as a $\mathbb{K}$-linear combination of distinct standard monomials guaranteed by (ASL-1), then $\forall \, i: \gamma_{i_1} \leq \alpha, \beta$. 
\end{itemize}
The right-hand side of the relation in (ASL-2) is allowed to be the empty sum (i.e., $0$). We abbreviate ``algebra with straightening laws'' as ASL. The relations in (ASL-2) are called the \emph{straightening relations} for an algebra $R$.

Let $S=\mathbb{K}[x_{\alpha} : \alpha \in P]$ denote the polynomial ring in $|P|$ many variables over the field $\mathbb{K}$, and define the surjection $\pi: S \to R$ by $\pi(x_\alpha) = \varphi(\alpha)$. The defining ideal $I_R$ of $R=\bigoplus_{n=0}^{\infty} R_n$ is the kernel $\ker(\pi)$ of $\pi$. If $\alpha,\beta \in P$ are incomparable, define
\[
f_{\alpha,\beta}
:= x_\alpha x_\beta - \sum_{i} r_i\,x_{\gamma_{i_1}}x_{\gamma_{i_2}} \cdots x_{\gamma_{i_{n_i}}}, \, \text{ with } 0 \neq r_i \in {\mathbb K}, \, \, \, \gamma_{i_1}\leq \gamma_{i_2} \leq \cdots,
\]
arising from (ASL-2). Then we have that $f_{\alpha,\beta} \in I_R$. Let $G_R$ denote the set of all polynomials $f_{\alpha,\beta}$ with $\alpha,\beta$ incomparable in $P$. Let $<_{\mathrm{rev}}$ denote the reverse lexicographic order \cite[Example~2.1.2(b)]{HHgtm260} on $S$ induced by an ordering of the variables such that $x_{\alpha} <_{\mathrm{rev}} x_{\beta}$ whenever $\alpha < \beta$ in $P$. It follows from (ASL-1) that $G_R$ is a Gröbner basis of $I_R$ with respect to $<_{\mathrm{rev}}$. In particular, $G_R$ generates $I_R$.

\smallskip

We briefly recall some basics on finite posets. A \emph{chain} in a finite poset $P$ is a totally ordered subset of $P$. The \emph{length} of a chain $C$ is $|C|-1$, where $|C|$ denotes the cardinality of $C$. The \emph{rank} of $P$ is the largest length of a chain in $P$; we denote it by $\rank(P)$. A finite poset is called \emph{pure} if all of its maximal chains have the same length. If $R$ is an ASL on $P$ over $\mathbb{K}$, then $\dim R=\rank(P)+1$.

\section{Pl\"ucker ideals and their elimination ideals}\label{sec: 2}
Let $A=\mathbb{K}[x_1,\ldots,x_n,y_1,\ldots,y_n]$ be the polynomial ring in $2n$ variables over a~field $\mathbb{K}$, and set
\[
p_{ij}:=x_i y_j - x_j y_i,\qquad \text{for } \; 1\le i<j\le n.
\]
Let $L_n =\{\,p_{ij} : 1\le i<j\le n\,\}$, and define a partial order on $L_n$ by $p_{ij}\le p_{k\ell}$ if $i\le k$ and $j\le \ell$. It is easy to see that $L_n$ is a distributive lattice \cite{Hibi87}. 
\begin{Example}
The Hasse diagram of the distributive lattice \(L_5\) is shown in Figure~\ref{fig: dist lattice L5}. We abuse notation by sometimes identifying \(L_n\) with the set of indices \(ij\) of the Plücker variables $p_{ij}$.
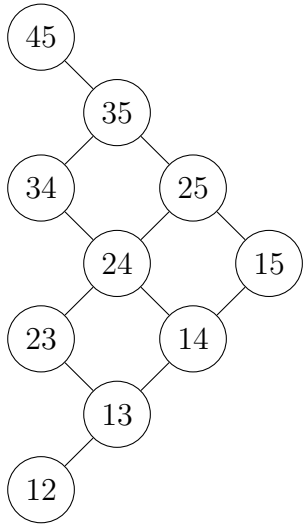
\begin{figure}
    \centering
\begin{tikzpicture}
\begin{scope}[rotate around z=0]
\node[draw,shape=circle] (12) at (0,0) {12};
\node[draw,shape=circle] (13) at (1,1) {13};
\node[draw,shape=circle] (14) at (2,2) {14};
\node[draw,shape=circle] (15) at (3,3) {15};
\node[draw,shape=circle] (23) at (0,2) {23};
\node[draw,shape=circle] (24) at (1,3) {24};
\node[draw,shape=circle] (25) at (2,4) {25};
\node[draw,shape=circle] (34) at (0,4) {34};
\node[draw,shape=circle] (35) at (1,5) {35};
\node[draw,shape=circle] (45) at (0,6) {45};
\draw (12)--(13)--(14)--(15);
\draw (23)--(24)--(25);
\draw (34)--(35);
\draw (13)--(23);
\draw (14)--(24)--(34);
\draw (15)--(25)--(35)--(45);
\end{scope}
\end{tikzpicture}
    \caption{The Hasse diagram of the distributive lattice $L_5$. }
    \label{fig: dist lattice L5}
\end{figure}    
\end{Example}
Let $\Rc_{\mathbb{K}}[L_n]$ denote the subalgebra of $A$ generated by the $\binom{n}{2}$ binomials $p_{ij}$ with $1\le i<j\le n$. In other words,
\[
\Rc_{\mathbb{K}}[L_n]=\mathbb{K}\big[\,x_i y_j - x_j y_i : 1\le i<j\le n\,\big]
\]
is the homogeneous coordinate ring of the Grassmannian of lines $\Gr(2,n)$.
It follows from \cite{DEP80} that $\Rc_\mathbb{K}[L_n]$ is an
ASL on $L_n$ over $\mathbb{K}$ whose (ASL-2) relations are 
\begin{eqnarray}
\label{(ASL-2)}
    p_{il}p_{jk} = p_{ik}p_{jl} - p_{ij}p_{kl}, \qquad 1\leq i<j<k<\ell\leq n.
\end{eqnarray}
The {\em Pl\"ucker ideal} $I_{L_n}$, which is the defining ideal of $\Rc_\mathbb{K}[L_n]$ in the polynomial ring $S=\mathbb{K}[\,p_{ij} : 1 \leq i < j \leq n]$ in ${n \choose 2}$ variables over $\mathbb{K}$, is generated by the quadratic {\em Pl\"ucker relations}
\begin{equation}\label{quadratic}
    Q_{ijk\ell} = p_{i\ell}p_{jk} - p_{ik}p_{j\ell} + p_{ij}p_{k\ell}, 
\qquad 1\leq i<j<k<\ell\leq n.  
\end{equation}
Let $<_{\rev}$ denote the reverse lexicographic order on $S$ induced by the variable order given by any linear extension of $L_n$. %for which $p_{ij} < p_{k \ell}$ if $i \leq k$ and $j \leq \ell$.  
Since $\Rc_\mathbb{K}[L_n]$ is an ASL on $L_n$ over $\mathbb{K}$, the  quadratic Pl\"ucker relations \eqref{quadratic}
form a Gr\"obner basis of $I_{L_n}$ with respect to $<_{\rev}$.  

\smallskip 

On the other hand, we introduce the finite poset $\Pi_n=\{\,p_{ij} : 1 \leq i < j \leq n \}$ whose partial order is defined by setting $p_{ij} \leq p_{k\ell}$ if $i \leq k$ and $j \geq \ell$.  An example of this poset for $n=5$ is shown in Figure~\ref{fig: HassePi5}. Similarly, let $<_{\lex}$ denote the lexicographic order \cite[Example 2.1.2(a)]{HHgtm260} on $S$ induced by the variable order given by any linear extension of $\Pi_n$, that is, $p_{k \ell} <_{\lex} p_{ij}$ if $p_{ij} <_{\Pi_n} p_{k\ell}$.

\begin{figure}
    \centering
\begin{tikzpicture}
\begin{scope}[rotate around z=-90]
\node[draw,shape=circle] (12) at (0,0) {12};
\node[draw,shape=circle] (13) at (1,1) {13};
\node[draw,shape=circle] (14) at (2,2) {14};
\node[draw,shape=circle] (15) at (3,3) {15};
\node[draw,shape=circle] (23) at (0,2) {23};
\node[draw,shape=circle] (24) at (1,3) {24};
\node[draw,shape=circle] (25) at (2,4) {25};
\node[draw,shape=circle] (34) at (0,4) {34};
\node[draw,shape=circle] (35) at (1,5) {35};
\node[draw,shape=circle] (45) at (0,6) {45};
\draw (12)--(13)--(14)--(15);
\draw (23)--(24)--(25);
\draw (34)--(35);
\draw (13)--(23);
\draw (14)--(24)--(34);
\draw (15)--(25)--(35)--(45);
\end{scope}
\end{tikzpicture}
    \caption{The Hasse diagram of the poset $\Pi_5$.}
    \label{fig: HassePi5}
\end{figure}
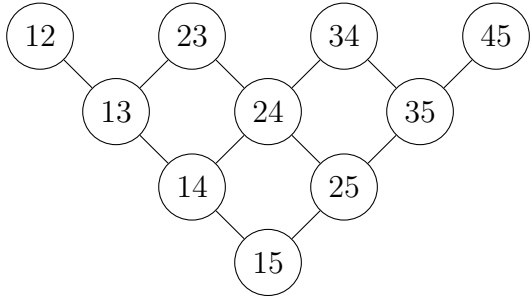

\begin{Lemma}
\label{lex}
The set of quadratic Pl\"ucker relations (\ref{quadratic}) is a Gr\"obner basis of the Plücker ideal $I_{L_n}$ with respect to $<_{\lex}$. 
\end{Lemma}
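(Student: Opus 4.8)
The plan is to show that the $<_{\lex}$-leading monomial of each quadric $Q_{ijk\ell}$ is exactly $p_{i\ell}p_{jk}$, the same monomial that is its $<_{\rev}$-leading term, and then to promote this coincidence of leading terms to an equality of initial ideals by a Hilbert-function comparison. First I would compute leading terms. Fix $i<j<k<\ell$ and consider the six variables occurring in $Q_{ijk\ell}$. In the poset $\Pi_n$ the variable $p_{i\ell}$ satisfies $p_{i\ell}\leq_{\Pi_n}p_{ab}$ for every other variable $p_{ab}\in\{p_{ik},p_{j\ell},p_{ij},p_{k\ell},p_{jk}\}$, since each such pair has first index $\geq i$ and second index $\leq\ell$; thus $p_{i\ell}$ is the $\Pi_n$-minimum among these six elements. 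By the defining convention $p_{ab}<_{\lex}p_{cd}$ whenever $p_{cd}<_{\Pi_n}p_{ab}$, the $\Pi_n$-minimal variable becomes the $<_{\lex}$-maximal one, so $p_{i\ell}$ is the $<_{\lex}$-largest variable appearing in $Q_{ijk\ell}$, independently of the chosen linear extension of $\Pi_n$. Since $p_{i\ell}$ occurs only in the monomial $p_{i\ell}p_{jk}$ and not in $p_{ik}p_{j\ell}$ or $p_{ij}p_{k\ell}$, the monomial $p_{i\ell}p_{jk}$ strictly dominates the other two lexicographically; hence $\ini_{<_{\lex}}(Q_{ijk\ell})=p_{i\ell}p_{jk}$.

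Next I would compare initial ideals. Write $J=\langle\,p_{i\ell}p_{jk} : 1\leq i<j<k<\ell\leq n\,\rangle$. Because $\Rc_\mathbb{K}[L_n]$ is an ASL on $L_n$, the quadrics $Q_{ijk\ell}$ form a Gr\"obner basis of $I_{L_n}$ with respect to $<_{\rev}$ with leading terms $p_{i\ell}p_{jk}$, so $J=\ini_{<_{\rev}}(I_{L_n})$. On the other hand, the previous step gives $Q_{ijk\ell}\in I_{L_n}$ with $\ini_{<_{\lex}}(Q_{ijk\ell})=p_{i\ell}p_{jk}$, whence $J\subseteq\ini_{<_{\lex}}(I_{L_n})$. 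Using the standard fact that a homogeneous ideal and any of its initial ideals share the same Hilbert function, I obtain, for every degree $d$,
\[
\dim_{\mathbb{K}}(S/J)_d=\dim_{\mathbb{K}}(S/I_{L_n})_d=\dim_{\mathbb{K}}(S/\ini_{<_{\lex}}(I_{L_n}))_d .
\]
Since $J\subseteq\ini_{<_{\lex}}(I_{L_n})$ and the two ideals have equal Hilbert functions, they must coincide; therefore $\ini_{<_{\lex}}(I_{L_n})=J=\langle\ini_{<_{\lex}}(Q_{ijk\ell})\rangle$, which is precisely the statement that the $Q_{ijk\ell}$ are a Gr\"obner basis for $<_{\lex}$.

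The genuinely delicate point is the leading-term computation: one must check that the single variable $p_{i\ell}$ dominates all the others in the lexicographic order and sits in only one term, so that no tie or cancellation can move the leading monomial elsewhere; the reversal built into the passage from $\Pi_n$ to $<_{\lex}$ is easy to misread here. A direct verification via Buchberger's criterion is also conceivable, but I expect it to be more cumbersome: although the leading terms of the $Q_{ijk\ell}$ agree under $<_{\lex}$ and $<_{\rev}$, their tails are ordered differently, so the $S$-pair reductions would have to be recomputed from scratch. The Hilbert-function argument sidesteps this entirely and exploits the already-known $<_{\rev}$ Gr\"obner basis property as an input.
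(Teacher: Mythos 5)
Your proof is correct and takes essentially the same route as the paper: the paper's one-line argument rests precisely on the observation that $\ini_{<_{\lex}}(Q_{ijk\ell}) = \ini_{<_{\rev}}(Q_{ijk\ell}) = p_{i\ell}p_{jk}$, so that the monomial ideal generated by the $<_{\lex}$-leading terms coincides with $\ini_{<_{\rev}}(I_{L_n})$, together with the (left implicit) comparison of Hilbert functions that you spell out. Your write-up simply makes explicit the leading-term computation in $\Pi_n$ and the standard fact that a containment of homogeneous ideals with equal Hilbert functions must be an equality.
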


\begin{proof}
Since the initial ideal ${\rm in}_{<_{\rev}}(I_{L_n})$ of $I_{L_n}$ with respect to $<_{\rev}$ coincides with the monomial ideal $({\rm in}_{<_{\lex}}(Q_{ijk\ell}): 1\leq i<j<k<l\leq n)$ and since the set of quadratic Pl\"ucker relations (\ref{quadratic}) is a Gr\"obner basis of $I_{L_n}$ with respect to $<_{\rev}$, it follows that the set of quadratic Pl\"ucker relations (\ref{quadratic}) is a Gr\"obner basis of $I_{L_n}$ with respect to $<_{\lex}$.   
\end{proof}

We now pass to the elimination problem. Let $L$ be a sublattice of $L_n$ and $\Rc_\mathbb{K}[L]$ the subalgebra of $\Rc_\mathbb{K}[L_n]$ which is generated by those binomials $p_{ij}=x_iy_j-x_jy_i$ with $p_{ij} \in L$.  Let $I_L$ denote the defining ideal of $\Rc_\mathbb{K}[L]$ in the polynomial ring $\mathbb{K}[p_{ij} : p_{ij} \in L]$ in $|L|$ many variables over $\mathbb{K}$. 

\begin{Lemma}
    \label{sublattice}
If $L$ is a sublattice of $L_n$, then $\Rc_\mathbb{K}[L]$ is an ASL on $L$ over $\mathbb{K}$ if and only if the following condition is satisfied:  if $p_{i\ell}$ and $p_{jk}$ with $i < j < k < \ell$ belong to $L$, then also $p_{ij}$ and $p_{k\ell}$ belong to $L$.  \end{Lemma}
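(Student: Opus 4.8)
The plan is to reduce everything to the single straightening relation of $\Rc_\mathbb{K}[L_n]$ and to the fact, guaranteed by (ASL-1) for $\Rc_\mathbb{K}[L_n]$, that the standard monomials on $L_n$ form a $\mathbb{K}$-basis. The first step is a purely order-theoretic observation: two elements $p_{ab},p_{cd}$ of $L_n$ are incomparable precisely when, after relabelling, $a<c<d<b$, so that every incomparable pair in $L_n$ (hence in $L$) has the form $p_{i\ell},p_{jk}$ with $i<j<k<\ell$. For such a pair the meet and join in $L_n$ are computed coordinatewise, giving $p_{i\ell}\wedge p_{jk}=p_{ik}$ and $p_{i\ell}\vee p_{jk}=p_{j\ell}$, and the straightening relation reads $p_{i\ell}p_{jk}=p_{ik}p_{j\ell}-p_{ij}p_{k\ell}$. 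The crucial point is that, because $L$ is a sublattice, the meet $p_{ik}$ and join $p_{j\ell}$ automatically lie in $L$ whenever $p_{i\ell},p_{jk}\in L$; the only factors of the right-hand side whose membership in $L$ is not guaranteed are those of the correction term $p_{ij}p_{k\ell}$, which are exactly the elements named in the stated condition. Note also that both $p_{ik}p_{j\ell}$ and $p_{ij}p_{k\ell}$ are themselves standard monomials, since $p_{ik}\le p_{j\ell}$ and $p_{ij}\le p_{k\ell}$.

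For necessity I would argue as follows. Assume $\Rc_\mathbb{K}[L]$ is an ASL on $L$, and let $p_{i\ell},p_{jk}\in L$ with $i<j<k<\ell$. Since any chain in $L$ is a chain in $L_n$, every standard monomial on $L$ is a standard monomial on $L_n$; in particular the standard monomials on $L$ are linearly independent, and expansions in them are unique inside $\Rc_\mathbb{K}[L_n]$. By (ASL-1) for $\Rc_\mathbb{K}[L]$ the product $p_{i\ell}p_{jk}$ is a $\mathbb{K}$-linear combination of standard monomials on $L$; comparing this with the unique expansion $p_{i\ell}p_{jk}=p_{ik}p_{j\ell}-p_{ij}p_{k\ell}$ in $\Rc_\mathbb{K}[L_n]$ forces $p_{ij}p_{k\ell}$ to be a standard monomial on $L$, whence $p_{ij},p_{k\ell}\in L$, which is the asserted condition.

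For sufficiency I would verify (ASL-1) and (ASL-2) directly. Linear independence of the standard monomials on $L$ is inherited from $L_n$ as above. For spanning, I would run the straightening algorithm of $\Rc_\mathbb{K}[L_n]$ on an arbitrary product of generators $p_{\alpha_1}\cdots p_{\alpha_m}$ with all $\alpha_t\in L$: each rewriting step replaces an incomparable pair $p_{i\ell}p_{jk}$ by $p_{ik}p_{j\ell}-p_{ij}p_{k\ell}$, and by the sublattice property together with the stated condition all of $p_{ik},p_{j\ell},p_{ij},p_{k\ell}$ lie in $L$, so every monomial produced remains supported on $L$. Since the algorithm terminates in $\Rc_\mathbb{K}[L_n]$ (the relations being the straightening relations of an ASL), it terminates here as well and expresses the product in standard monomials on $L$, giving (ASL-1). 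For (ASL-2), the relation $p_{i\ell}p_{jk}=p_{ik}p_{j\ell}-p_{ij}p_{k\ell}$ already exhibits the product as a combination of standard monomials on $L$, and the minimality requirement holds because the least factors $p_{ik}$ and $p_{ij}$ of the two terms both satisfy $p_{ik},p_{ij}\le p_{i\ell}$ and $p_{ik},p_{ij}\le p_{jk}$.

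The main obstacle, and the conceptual heart of the argument, is controlling the straightening algorithm so that it never leaves $L$. The sublattice hypothesis handles the leading part $p_{ik}p_{j\ell}$ of each relation for free, but says nothing about the correction term; the stated condition is precisely what keeps $p_{ij}p_{k\ell}$ inside $L$ as well. Everything else rests on the uniqueness of standard-monomial expansions in $\Rc_\mathbb{K}[L_n]$, which is what converts a failure of the condition into the non-expressibility of $p_{i\ell}p_{jk}$ over $L$, and hence into the failure of (ASL-1).
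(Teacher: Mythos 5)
Your proof is correct and follows essentially the same route as the paper: necessity via the uniqueness of standard-monomial expansions in $\Rc_\mathbb{K}[L_n]$ compared against the straightening relation $p_{i\ell}p_{jk}=p_{ik}p_{j\ell}-p_{ij}p_{k\ell}$, and sufficiency from the observation that the sublattice property plus the stated condition keep every straightening relation supported in $L$. The only difference is that you spell out the verification of (ASL-1) (linear independence inherited from $L_n$, spanning via termination of the straightening algorithm), which the paper leaves implicit in its one-line ``if'' direction.
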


\begin{proof}
 \noindent
{\bf (``if'')} 
Let $p_{i\ell}$ and $p_{jk}$ with $i < j < k < \ell$ belong to $L$.  Since $L$ is a sublattice, both $p_{ik} = p_{i\ell} \wedge p_{jk}$ and $p_{j \ell} = p_{i\ell} \vee p_{jk}$ belong to $L$.  Since $p_{ij}$ and $p_{k\ell}$ belong to $L$, the (ASL-2) relation (\ref{(ASL-2)}) remains in $L$.  Thus $\Rc_\mathbb{K}[L]$ is an ASL on $L$ over $\mathbb{K}$.
\smallskip

\noindent
{\bf (``only if'')} Let $p_{i\ell}$ and $p_{jk}$ with $i < j < k < \ell$ belong to $L$ and suppose that $p_{ij}$ or $p_{k\ell}$ does not belong to $L$.  Let us express $p_{i\ell}p_{jk}$ as a linear combination of standard monomials of $\Rc_\mathbb{K}[L]$, say, 
\[
p_{i\ell}p_{jk} = \sum_{q=1}^{t} c_{q}p_{i_qj_q}p_{k_q\ell_q}, \quad 0 \neq c_{q} \in \KK.
\]
Then, by using (\ref{(ASL-2)}), in $\Rc_\mathbb{K}[L_n]$ one has
\begin{eqnarray}
\label{123456789}   
p_{ik}p_{j\ell} - p_{ij}p_{k\ell} = \sum_{q=1}^{t} c_{q}p_{i_qj_q}p_{k_q\ell_q}.
\end{eqnarray}
However, (\ref{123456789}) contradicts (ASL-1) of $\Rc_\mathbb{K}[L_n]$, as $p_{ij}$ or $p_{k\ell}$ does not appear in the right-hand side of (\ref{123456789}).  Thus $\Rc_\mathbb{K}[L]$ cannot be an ASL on $L$ over $\mathbb{K}$.
\end{proof}

A \emph{poset ideal} of a finite poset \(P\) is a subset \(J \subseteq P\) with the property that if \(\alpha \in J\) and \(\beta \in P\) satisfy \(\beta < \alpha\), then \(\beta \in J\).

\begin{Definition}
We say that a sublattice \(L\) of \(L_n\) is \emph{compatible} with \(\Pi_n\) if \(L\) has rank \(n\) and \(L_n \setminus L\) is a poset ideal of \(\Pi_n\).
\end{Definition}

\begin{figure}
    \centering
\begin{tikzpicture}
\begin{scope}[rotate around z=0]
\node[draw,shape=circle] (12) at (0,0) {12};
\node[draw,shape=circle] (13) at (1,1) {13};
\node[draw,shape=circle] (14) at (2,2) {14};
\node[draw,shape=circle] (15) at (3,3) {15};
\node[draw,shape=circle] (23) at (0,2) {23};
\node[draw,shape=circle] (24) at (1,3) {24};
\node[draw,shape=circle] (25) at (2,4) {25};
\node[draw,shape=circle] (26) at (3,5) {26};
\node[draw,shape=circle] (34) at (0,4) {34};
\node[draw,shape=circle] (35) at (1,5) {35};
\node[draw,shape=circle] (36) at (2,6) {36};
\node[draw,shape=circle] (45) at (0,6) {45};
\node[draw,shape=circle] (46) at (1,7) {46};
\node[draw,shape=circle] (47) at (2,8) {47};
\node[draw,shape=circle] (56) at (0,8) {56};
\node[draw,shape=circle] (57) at (1,9) {57};
\node[draw,shape=circle] (67) at (0,10) {67};
\draw[line width=3] (12)--(13)--(14)--(15)--(25)--(26)--(36)--(46)--(47)--(57)--(67);
\draw (23)--(24)--(25)--(26);
\draw (34)--(35)--(36);
\draw (45)--(46)--(47);
\draw (56)--(57);
\draw (47)--(57)--(67);
\draw (13)--(23);
\draw (14)--(24)--(34);
\draw (25)--(35)--(45);
\draw (26)--(36)--(46)--(56);
\end{scope}
\end{tikzpicture}    
    \caption{The Hasse diagram of a perfect compatible sublattice of~\(L_7\). The fundamental chain is highlighted in bold.}
    \label{fig: fundamental_chain}
\end{figure}
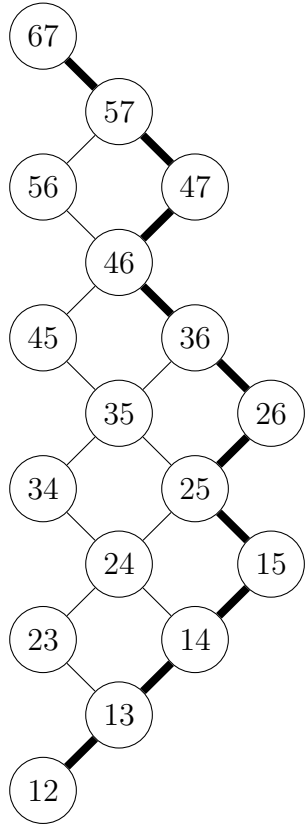

Summarizing the discussion so far, we arrive at the main result of Section~\ref{sec: 2}.

\begin{Theorem}
\label{compatible}
Let \(L\) be a compatible sublattice of \(L_n\). Then \(\mathcal{R}_{\mathbb{K}}[L]\) is an ASL on \(L\) over \(\mathbb{K}\). In particular, \(I_L\) is quadratically generated and \(\mathcal{R}_{\mathbb{K}}[L]\) is Cohen--Macaulay. Furthermore, \(I_L\) is an elimination ideal of the Plücker ideal \(I_{L_n}\).
\end{Theorem}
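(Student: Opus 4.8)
The plan is to verify the four assertions in turn, leaning on Lemmas~\ref{sublattice} and~\ref{lex}. First I would establish the ASL property through the combinatorial criterion of Lemma~\ref{sublattice}. So suppose $p_{i\ell},p_{jk}\in L$ with $i<j<k<\ell$; I must produce $p_{ij},p_{k\ell}\in L$. The key observation is that in $\Pi_n$ one has $p_{i\ell}<_{\Pi_n}p_{ij}$ (since $i\le i$ and $\ell\ge j$ with $\ell\neq j$) and likewise $p_{i\ell}<_{\Pi_n}p_{k\ell}$ (since $i\le k$ and $\ell\ge\ell$ with $i\neq k$). Because $L_n\setminus L$ is a poset ideal of $\Pi_n$, if either $p_{ij}$ or $p_{k\ell}$ belonged to $L_n\setminus L$, then the smaller element $p_{i\ell}$ would too, contradicting $p_{i\ell}\in L$. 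Hence $p_{ij},p_{k\ell}\in L$, and Lemma~\ref{sublattice} yields that $\Rc_\mathbb{K}[L]$ is an ASL on $L$ over $\mathbb{K}$.

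Granting the ASL property, quadratic generation is immediate: $I_L$ is generated by the straightening relations $f_{\alpha,\beta}$, and since every generator $p_{ij}=x_iy_j-x_jy_i$ is homogeneous of degree two in $A$, the product $\varphi(\alpha)\varphi(\beta)$ and each standard monomial on the right-hand side of the corresponding straightening law have degree four, so each $f_{\alpha,\beta}$ is a quadric. For Cohen--Macaulayness I would use that $L$, being a sublattice of the distributive lattice $L_n$, is again a finite distributive lattice, hence pure; by the discussion in Section~\ref{sec: 1} the initial ideal $\ini_{<_{\rev}}(I_L)$ is the Stanley--Reisner ideal of the order complex $\Delta(L)$, which is shellable, and therefore Cohen--Macaulay, because $L$ is distributive. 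Cohen--Macaulayness then lifts from $\ini_{<_{\rev}}(I_L)$ back to $I_L$.

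For the elimination statement I would combine Lemma~\ref{lex} with an elimination-order argument. Because $L_n\setminus L$ is a poset ideal of $\Pi_n$, I can choose the linear extension of $\Pi_n$ defining $<_{\lex}$ so that $L_n\setminus L$ occupies an initial segment of it; since the variable order underlying $<_{\lex}$ is the reverse of this linear extension, the variables $p_{ij}$ with $p_{ij}\in L_n\setminus L$ become lexicographically largest, and hence $<_{\lex}$ is an elimination order for the set $\{p_{ij}:p_{ij}\in L_n\setminus L\}$. By Lemma~\ref{lex} the quadratic Plücker relations \eqref{quadratic} form a Gröbner basis of $I_{L_n}$ with respect to $<_{\lex}$, so by the elimination theorem \cite[Theorem~1.4.1]{GB2013} the subset $\{Q_{ijk\ell}\}\cap\mathbb{K}[p_{ij}:p_{ij}\in L]$ generates $I_{L_n}\cap\mathbb{K}[p_{ij}:p_{ij}\in L]$. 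It remains to identify this subset: a relation $Q_{ijk\ell}$ survives exactly when all of $p_{i\ell},p_{jk},p_{ik},p_{j\ell},p_{ij},p_{k\ell}$ lie in $L$, which by the first paragraph together with the sublattice property (which supplies $p_{ik}=p_{i\ell}\wedge p_{jk}$ and $p_{j\ell}=p_{i\ell}\vee p_{jk}$) happens precisely when $p_{i\ell},p_{jk}\in L$; but these surviving quadrics are exactly the straightening relations generating $I_L$. Thus $I_L=I_{L_n}\cap\mathbb{K}[p_{ij}:p_{ij}\in L]$.

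I expect the elimination step to be the main obstacle. The delicate point is realizing $<_{\lex}$ as a genuine elimination order for $L_n\setminus L$: this depends crucially on the poset-ideal hypothesis, which is precisely what allows $L_n\setminus L$ to be pushed to the top of the lexicographic variable order while still defining an order covered by Lemma~\ref{lex}. The second place demanding care is the verification that the surviving quadrics coincide \emph{exactly} with the straightening relations of $\Rc_\mathbb{K}[L]$ — no more and no fewer — which is where the equivalence between $p_{i\ell},p_{jk}\in L$ and the membership of all six variables of $Q_{ijk\ell}$ in $L$ does the work.
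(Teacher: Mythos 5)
Your proposal is correct, and for the ASL property, quadratic generation, and the elimination statement it takes essentially the paper's route: you derive $p_{ij},p_{k\ell}\in L$ from the poset-ideal hypothesis and feed this into Lemma~\ref{sublattice}; you read off quadratic generation from the straightening relations; and for elimination you construct precisely the order the paper fixes---a lexicographic order whose underlying variable order reverses a linear extension of $\Pi_n$ chosen so that $L_n\setminus L$ forms an initial segment, making the variables to be eliminated lexicographically largest---then apply Lemma~\ref{lex} and the Elimination Theorem, and identify the surviving quadrics $Q_{ijk\ell}$ (those with $p_{i\ell},p_{jk}\in L$, hence all six variables in $L$ by the poset-ideal and sublattice properties) with the straightening relations generating $I_L$, exactly as in \eqref{QQQ} and \eqref{QQQQQ}. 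The one genuine divergence is the Cohen--Macaulay step. The paper handles it by citation: sublattices of distributive lattices are distributive, distributive lattices are wonderful, and \cite[Corollary~4.2]{Eisenbud80} applies. You instead argue by Gr\"obner degeneration: since $\Rc_\mathbb{K}[L]$ is an ASL, $\ini_{<_{\rev}}(I_L)$ is the Stanley--Reisner ideal of the order complex of the finite distributive lattice $L$, which is pure and shellable, hence Cohen--Macaulay, and Cohen--Macaulayness descends from the initial ideal to $I_L$. This is a valid alternative; what it buys is an argument internal to the Gr\"obner-basis framework already set up in Section~\ref{sec: 1}, at the cost of invoking two standard facts that should be cited (shellability of order complexes of finite distributive lattices, e.g.\ via EL-shellability, and the transfer of Cohen--Macaulayness along Gr\"obner degenerations), whereas the paper's appeal to Eisenbud's ASL theory is shorter but less self-contained.
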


\begin{proof}
Let $p_{i \ell}$ and $p_{j k}$ with $i < j < k < \ell$ belong to $L$.  Since $L_n \setminus L$ is a poset ideal of $\Pi_n$, both $p_{ij}$ and $p_{k\ell}$ belong to $L$.  It then follows from Lemma \ref{sublattice}  that $\Rc_\mathbb{K}[L]$ is an ASL on $L$ over $\mathbb{K}$.  In particular,
\begin{eqnarray}
\label{QQQ}    
I_{L} = (Q_{ijk\ell} : 1\leq i<j<k<\ell \leq n, \; p_{i\ell} \in L).
\end{eqnarray}
Since every sublattice of a distributive lattice is distributive and since a distributive lattice is wonderful \cite[p.~263]{Eisenbud80}, it follows from \cite[Corollary 4.2]{Eisenbud80} that $\Rc_\mathbb{K}[L]$ is Cohen--Macaulay.

Fix the lexicographic order $<_{\lex}$ on the polynomial ring $S=\mathbb{K}[p_{ij} : 1 \leq i < j \leq n]$ induced by the variable order such that
\begin{itemize}
    \item[$(i)$] $p_{k\ell} <_{\lex} p_{ij}$ if $p_{ij} < p_{k \ell}$ in $\Pi_n$;
    \item[$(ii)$] $p_{k\ell} <_{\lex} p_{ij}$ if $p_{ij} \not\in L$ and $p_{k\ell} \in L$.
\end{itemize}
 It follows from (\ref{QQQ}) that  
\begin{eqnarray}
\label{QQQQQ}
I_{L} = (Q_{ijk\ell} : 1\leq i < j < k < \ell \leq n) \cap \mathbb{K}[\,p_{i\ell} : p_{i\ell} \in L].
\end{eqnarray}
Since \(p_{k\ell} <_{\lex} p_{ij}\) whenever \(p_{ij} \notin L\) and \(p_{k\ell} \in L\), and since the set of quadratic Plücker relations \eqref{quadratic} forms a Gröbner basis of \(I_{L_n}\) with respect to \(<_{\lex}\) (Lemma~\ref{lex}), it follows from \eqref{QQQQQ} that \(I_L\) is an elimination ideal of \(I_{L_n}\), as desired.
\end{proof}

We conclude this section with several further definitions and examples. 
\begin{Definition}
 A compatible sublattice $L$ of $L_n$ is called {\em perfect} if 
 $$\dim \Rc_\mathbb{K}[L] = 2n-3.$$   
\end{Definition}
Since 
$\dim \Rc_\mathbb{K}[L] = \rank(L) + 1 \leq \rank(L_n) + 1 = 2n-3$, it follows that \(L\) is perfect if and only if \(p_{i,i+1} \in L\) for each \(1 \leq i \leq n-1\) and \(p_{i,i+2} \in L\) for each \(1 \leq i \leq n-2\).  The number of perfect compatible sublattices of \(L_n\) is the Catalan number (for other combinatorial interpretations see \cite{Stanley15}):
\[
 C_{n-2} = \frac{1}{n-1}\binom{2n - 4}{\,n - 2\,}.
 \]
\begin{Example} For $n=5$ there are $C_3 = 5$ perfect compatible sublattices of $L_5$. Their Hasse diagrams are shown in Figure~\ref{fig: perfect_sublattices_L5}.
    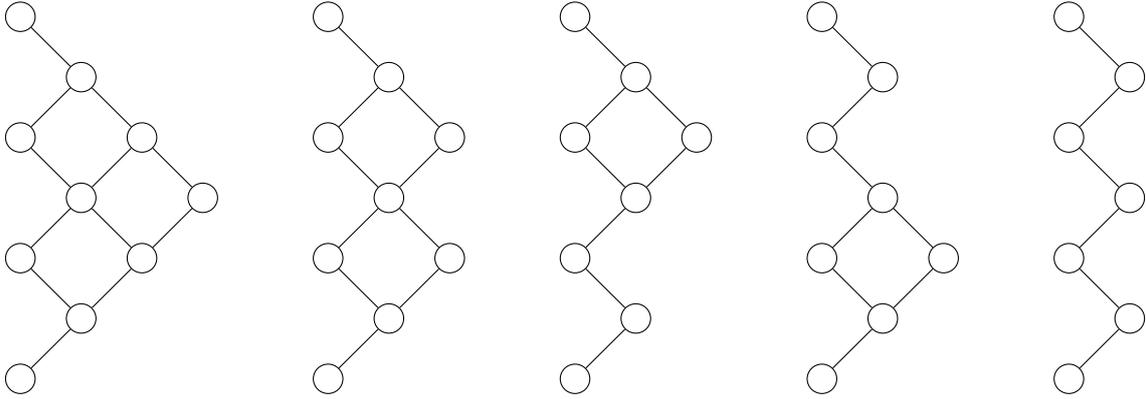
\begin{figure}
    \centering
\begin{tikzpicture}[scale=0.8]
\node[draw,shape=circle] (14) at (4,2) {};
\node[draw,shape=circle] (15) at (5,3) {};
\node[draw,shape=circle] (16) at (6,4) {};
\node[draw,shape=circle] (17) at (7,5) {};
\node[draw,shape=circle] (25) at (4,4) {};
\node[draw,shape=circle] (26) at (5,5) {};
\node[draw,shape=circle] (27) at (6,6) {};
\node[draw,shape=circle] (36) at (4,6) {};
\node[draw,shape=circle] (37) at (5,7) {};
\node[draw,shape=circle] (47) at (4,8) {};
\draw (14)--(15)--(16)--(17);
\draw (25)--(26)--(27);
\draw (36)--(37);
\draw (17)--(27)--(37)--(47);
\draw (15)--(25);
\draw(16)--(26)--(36);
\end{tikzpicture} \ \ \ \ \ \ \ \ \ 
\begin{tikzpicture}[scale=0.8]
\node[draw,shape=circle] (14) at (4,2) {};
\node[draw,shape=circle] (15) at (5,3) {};
\node[draw,shape=circle] (16) at (6,4) {};
\node[draw,shape=circle] (25) at (4,4) {};
\node[draw,shape=circle] (26) at (5,5) {};
\node[draw,shape=circle] (27) at (6,6) {};
\node[draw,shape=circle] (36) at (4,6) {};
\node[draw,shape=circle] (37) at (5,7) {};
\node[draw,shape=circle] (47) at (4,8) {};
\draw (14)--(15)--(16);
\draw (25)--(26)--(27);
\draw (36)--(37);
\draw (27)--(37)--(47);
\draw (15)--(25);
\draw(16)--(26)--(36);
\end{tikzpicture} \ \ \ \ \ \ \ \ \ 
\begin{tikzpicture}[scale=0.8]
\node[draw,shape=circle] (14) at (4,2) {};
\node[draw,shape=circle] (15) at (5,3) {};
\node[draw,shape=circle] (25) at (4,4) {};
\node[draw,shape=circle] (26) at (5,5) {};
\node[draw,shape=circle] (27) at (6,6) {};
\node[draw,shape=circle] (36) at (4,6) {};
\node[draw,shape=circle] (37) at (5,7) {};
\node[draw,shape=circle] (47) at (4,8) {};
\draw (14)--(15);
\draw (25)--(26)--(27);
\draw (36)--(37);
\draw (27)--(37)--(47);
\draw (15)--(25);
\draw (26)--(36);
\end{tikzpicture} \ \ \ \ \ \ \ \ \ 
\begin{tikzpicture}[scale=0.8]
\node[draw,shape=circle] (14) at (4,2) {};
\node[draw,shape=circle] (15) at (5,3) {};
\node[draw,shape=circle] (16) at (6,4) {};
\node[draw,shape=circle] (25) at (4,4) {};
\node[draw,shape=circle] (26) at (5,5) {};
\node[draw,shape=circle] (36) at (4,6) {};
\node[draw,shape=circle] (37) at (5,7) {};
\node[draw,shape=circle] (47) at (4,8) {};
\draw (14)--(15)--(16);
\draw (25)--(26);
\draw (36)--(37);
\draw (37)--(47);
\draw (15)--(25);
\draw (16)--(26)--(36);
\end{tikzpicture} \ \ \ \ \ \ \ \ \ 
\begin{tikzpicture}[scale=0.8]
\node[draw,shape=circle] (14) at (4,2) {};
\node[draw,shape=circle] (15) at (5,3) {};
\node[draw,shape=circle] (25) at (4,4) {};
\node[draw,shape=circle] (26) at (5,5) {};
\node[draw,shape=circle] (36) at (4,6) {};
\node[draw,shape=circle] (37) at (5,7) {};
\node[draw,shape=circle] (47) at (4,8) {};
\draw (14)--(15)--(25)--(26)--(36)--(37)--(47);
\end{tikzpicture}
    \caption{Perfect compatible sublattices of the lattice $L_5$.}
    \label{fig: perfect_sublattices_L5}
\end{figure}
\end{Example}
Let \(L\) be a perfect compatible sublattice of \(L_n\). Since \(L_n \setminus L\) is a poset ideal of \(\Pi_n\), it follows that if \(p_{ij} \in L\), then \(p_{i+1,j} \in L\).  
For \(p_{ij} \in L\), we define \(p_{ij}^{(1)}\) to be \(p_{i,j+1}\) if \(p_{i,j+1} \in L\), and \(p_{i+1,j}\) otherwise. Recursively, for \(k \geq 2\), we set
\[
p_{ij}^{(k)} := \bigl(p_{ij}^{(k-1)}\bigr)^{(1)}.
\]
The chain
\[
p_{12} < p_{12}^{(1)} < p_{12}^{(2)} < \cdots < p_{12}^{(2n-4)}
 \]
in \(L\) of length \(2n-4\) is called the \emph{fundamental chain}.
\begin{Example}
    The fundamental chain of a perfect compatible sublattice of $L_7$ is shown in Figure~\ref{fig: fundamental_chain}.
\end{Example}

\section{Interval graphs and the Gorenstein property}  \label{sec: 3}
Let $G$ be a finite simple graph on $[n]=\{1, \ldots, n\}$ with no isolated vertices. 
A \emph{clique} of $G$ is a subset $C \subseteq [n]$ such that, for any distinct 
$i,j \in C$, the pair $\{i,j\}$ is an edge of $G$. 
Let $\Delta(G)$ denote the set of cliques of $G$. 
Since $\Delta(G)$ is a simplicial complex on $[n]$, we call $\Delta(G)$ the 
\emph{clique complex} of $G$. 
A \emph{maximal clique} of $G$ is a maximal face (i.e.~a facet) of $\Delta(G)$.

A \emph{chordal} graph is a finite graph $G$ in which every cycle of length greater 
than $3$ has a chord. We review a distinguished subfamily of chordal graphs.
\begin{Definition}
  We say that $G$ is an \emph{interval graph} if each maximal clique of $G$ is an 
interval $[a,b]$ with $1 \leq a < b \leq n$, where
$
[a,b] = \{a, a+1, \ldots, b\}.
$  
\end{Definition}

\begin{Example}
The finite graph on the left side of Figure~\ref{fig: interval_graph} is an interval graph. In contrast, the isomorphic finite graph on the right side of Figure~\ref{fig: interval_graph} is not an interval graph: a clique $\{2,4,5\}$ is not an interval. 
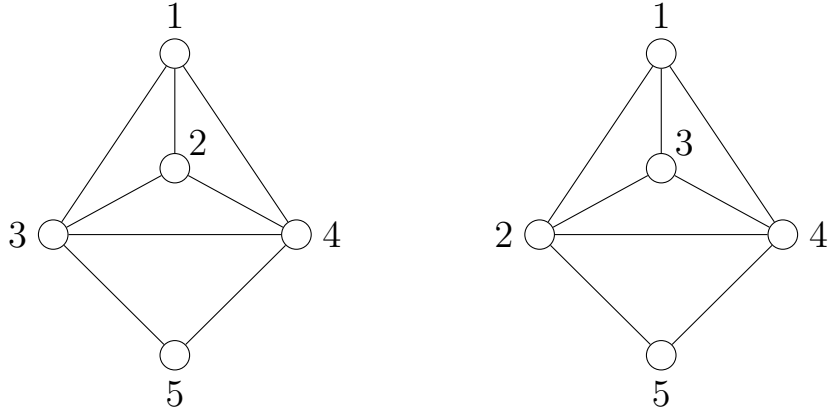
\begin{figure}
    \centering
\begin{tikzpicture}[scale=0.8]
\node[draw,shape=circle] (1) at (2,3) {};
\node[draw,shape=circle] (2) at (2,1.1) {};
\node[draw,shape=circle] (3) at (0,0) {};
\node[draw,shape=circle] (4) at (4,0) {};
\node[draw,shape=circle] (5) at (2,-2) {};
\node[draw,shape=circle] (6) at (10,3) {};
\node[draw,shape=circle] (7) at (10,1.1) {};
\node[draw,shape=circle] (8) at (8,0) {};
\node[draw,shape=circle] (9) at (12,0) {};
\node[draw,shape=circle] (10) at (10,-2) {};
\draw (1)node[above=2mm]{\large{1}}--(2)node[above right=0.5mm]{\large{2}}--(3)node[left=2mm]{\large{3}}--(4)node[right=2mm]{\large{4}}--(5)node[below=2mm]{\large{5}}--(3)--(1)--(4);
\draw (2)--(4);
\draw (6)node[above=2mm]{\large{1}}--(7)node[above right=0.5mm]{\large{3}}--(8)node[left=2mm]{\large{2}}--(9)node[right=2mm]{\large{4}}--(10)node[below=2mm]{\large{5}}--(8)--(6)--(9);
\draw (7)--(9);
\end{tikzpicture}
    \caption{Two isomorphic graphs: an interval graph (left) and a non-interval graph (right).}
    \label{fig: interval_graph}
\end{figure}
\end{Example}

\begin{Lemma}
Every interval graph is a chordal graph.   
\end{Lemma}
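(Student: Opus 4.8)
The plan is to deduce chordality directly from the Definition above --- namely that every maximal clique of $G$ is an interval $[a,b]=\{a,a+1,\dots,b\}$ --- using the paper's own characterization of a chordal graph as one in which every cycle of length greater than $3$ has a chord. The key structural input is a convexity (or betweenness) property of the edges: if $\{i,j\}$ is an edge with $i<j$ and $i<\ell<j$, then $\ell$ is adjacent to both $i$ and $j$. First I would justify this. The two-element set $\{i,j\}$ is a clique, hence a face of $\Delta(G)$, so it is contained in some maximal clique; by hypothesis that maximal clique is an interval $[a,b]$ with $a\le i<j\le b$. Then $a\le i<\ell<j\le b$ forces $\ell\in[a,b]$, and since $[a,b]$ is itself a clique, $\{i,\ell\}$ and $\{\ell,j\}$ are edges of $G$.

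Granting this, let $C\colon v_1-v_2-\cdots-v_k-v_1$ be an arbitrary cycle of $G$ with $k\ge 4$; I would produce a chord. Among the vertices of $C$, choose the one with the smallest label, say $v$, and let $u$ and $w$ be its two neighbours along $C$, named so that $u<w$. Because $v$ is the strict minimum, both neighbours exceed it, giving $v<u<w$. Now $\{v,w\}$ is a cycle edge, and applying the convexity property to this edge together with the intermediate vertex $u$ shows that $\{u,w\}$ is an edge of $G$. Finally, since $k\ge 4$, the two $C$-neighbours $u$ and $w$ of $v$ are separated along $C$ by a path of length $k-2\ge 2$ that avoids $v$, so $u$ and $w$ are not consecutive on $C$. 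Hence $\{u,w\}$ is a genuine chord of $C$, and as $C$ was an arbitrary cycle of length greater than $3$, the graph $G$ is chordal.

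I expect the only delicate points to be bookkeeping rather than conceptual. The main care is in the convexity step: one must pass from the edge $\{i,j\}$ to an \emph{interval} maximal clique containing it, which is exactly where the Definition is used, and then exploit that integer intervals are closed under betweenness. The second point to verify is that the chord produced is not already a cycle edge; this is guaranteed precisely by $k\ge 4$, since the two neighbours of the minimal vertex are non-adjacent on the cycle only once the cycle has at least four vertices (for $k=3$ they are joined by a cycle edge, consistent with triangles being permitted in chordal graphs).
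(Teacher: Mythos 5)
Your proof is correct, but it follows a genuinely different route from the paper's. The paper's argument is structural and relies on cited machinery: it orders the maximal cliques $C_1=[a_1,b_1],\ldots,C_s=[a_s,b_s]$ by left endpoint, observes that $C_i\cap C_j\subseteq C_{j-1}\cap C_j$ for $i<j$, concludes that this is a \emph{leaf order}, so that $\Delta(G)$ is a \emph{quasi-forest}, and then invokes \cite[Theorem 9.2.12]{HHgtm260} (the quasi-forest characterization of chordal graphs) to finish. You instead verify the chord condition directly: you first extract from the interval hypothesis a betweenness property (an edge $\{i,j\}$ with $i<\ell<j$ forces $\{i,\ell\}$ and $\{\ell,j\}$ to be edges, since the maximal clique containing $\{i,j\}$ is an interval), and then, on any cycle of length at least $4$, the minimal-label vertex $v$ with cycle-neighbours $u<w$ satisfies $v<u<w$, so betweenness applied to the cycle edge $\{v,w\}$ yields the chord $\{u,w\}$; your check that $k\ge 4$ rules out $\{u,w\}$ being a cycle edge is the right one. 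The trade-off: your argument is elementary and self-contained, needing nothing beyond the definitions, while the paper's is shorter and places interval graphs inside the leaf-order/quasi-forest framework that recurs in the monomial-ideal literature it draws on. Both are valid proofs of the Lemma.
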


\begin{proof}
Let $G$ is an interval graph on $[n]$ and suppose that the maximal cliques of $G$ are $C_1 =[a_1, b_1], \ldots, C_s=[a_s, b_s]$ with $1=a_1 < \cdots < a_s < n$.  Since 
\[
C_i \cap C_j \subset C_{j-1} \cap C_j, \quad \text{for } \; 1 \leq i < j \leq s, 
\]
the ordering $C_1, \ldots, C_s$ is a \emph{leaf order} \cite[p.~172]{HHgtm260} of $\Delta(G)$.  Hence $\Delta(G)$ is a \emph{quasi-forest} \cite[p.~172]{HHgtm260} and $G$ is a chordal graph (\cite[Theorem 9.2.12]{HHgtm260}).  
\end{proof}

Let, as in the previous section, \(L_n = \{p_{ij} : 1 \leq i < j \leq n\}\) be the finite distributive lattice. Given an arbitrary subposet \(L\) of \(L_n\), we associate to \(L\) a finite graph \(G_L\) on \([n]\) whose edges are precisely those \(\{i,j\}\) for which \(p_{ij} \in L\).

\begin{Example}
    Let $L$ be the subposet of $L_6$ illustrated in Figure~\ref{fig: subposet_graph} on the left.  Then $G_L$ is the finite graph on $[6]$ shown in the same figure on the right. 
\end{Example}
\begin{Lemma}
    \label{Sydney}
    Let $L$ be a subposet of $L_n$.  Then $L_n \setminus L$ is a poset ideal of $\Pi_n$ if and only if $G_L$ is an interval graph.  
\end{Lemma}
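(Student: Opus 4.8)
The plan is to convert the order-theoretic condition on $\Pi_n$ into a statement about cliques of $G_L$, and then to compare that statement with the definition of interval graph. Throughout I keep the standing convention of this section that $G_L$ has no isolated vertices.

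First I would translate the hypothesis. By passing to complements, $L_n \setminus L$ is a poset ideal of $\Pi_n$ if and only if $L$ is an order filter (up-set) of $\Pi_n$, i.e.\ whenever $p_{ij} \in L$ and $p_{ij} \leq_{\Pi_n} p_{k\ell}$ we also have $p_{k\ell} \in L$. Since $p_{ij} \leq_{\Pi_n} p_{k\ell}$ means $i \leq k$ and $\ell \leq j$, and since the elements of $\Pi_n$ require $k<\ell$, this says precisely that if $p_{ij} \in L$ then $p_{k\ell} \in L$ for every $k,\ell$ with $i \leq k < \ell \leq j$. Rephrased via $G_L$, this is the condition
\[
(\star)\colon\quad \text{for every edge } \{i,j\} \text{ of } G_L,\ \text{the vertex set } [i,j] \text{ induces a clique of } G_L.
\]
Establishing this dictionary is the heart of the argument; the two implications below are then short.

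For $(\star) \Rightarrow$ ``$G_L$ is an interval graph'', let $C$ be a maximal clique and set $a = \min C$, $b = \max C$. Since $G_L$ has no isolated vertices, $|C| \geq 2$, so $a < b$ and $\{a,b\}$ is an edge; by $(\star)$ the interval $[a,b]$ is a clique, and $C \subseteq [a,b]$ because every element of $C$ lies between $a$ and $b$. Maximality of $C$ then forces $C = [a,b]$, so every maximal clique is an interval. Conversely, assume every maximal clique of $G_L$ is an interval and let $\{i,j\}$ be an edge with $i < j$. Choosing a maximal clique $C = [a,b]$ containing it, we get $a \leq i < j \leq b$, whence $[i,j] \subseteq [a,b] = C$; as a subset of a clique is a clique, $(\star)$ holds. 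This proves the equivalence.

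The main obstacle is the first step: one must correctly dualize ``poset ideal of $\Pi_n$'' — paying attention to the reversed second coordinate in the order of $\Pi_n$ — and recognize that the resulting up-set condition is exactly the assertion that every edge-interval $[i,j]$ induces a clique. Once this translation is in hand, both directions follow from elementary manipulations with $\min$, $\max$, and clique maximality.
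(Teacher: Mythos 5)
Your proof is correct, and it takes a genuinely different route from the paper's in one of the two directions. Both arguments rest on the same dictionary, which you make explicit: $L_n\setminus L$ is a poset ideal of $\Pi_n$ precisely when $L$ is an up-set of $\Pi_n$, i.e., precisely when every edge $\{i,j\}$ of $G_L$ forces the whole interval $[i,j]$ to induce a clique. For the implication ``$G_L$ is an interval graph $\Rightarrow$ $L_n\setminus L$ is a poset ideal of $\Pi_n$'' your argument and the paper's are essentially identical: an edge lies in some maximal clique $[a,b]$, and a subset of a clique is a clique. The difference is in the converse. The paper does not derive it from its own definition of interval graph; instead it invokes the characterization \cite[Theorem~2.2]{EHH11} --- the closure condition $(*)$ on pairs of edges sharing an endpoint --- and checks that the poset-ideal hypothesis implies $(*)$. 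You prove that direction from first principles: if $C$ is a maximal clique, $a=\min C$ and $b=\max C$, then $\{a,b\}$ is an edge, so $[a,b]$ is a clique containing $C$, and maximality gives $C=[a,b]$ (the standing no-isolated-vertex convention ensuring $|C|\geq 2$). Thus your proof is self-contained, in effect reproving the portion of \cite[Theorem~2.2]{EHH11} that the paper uses, while the paper's is shorter modulo the external citation. Your explicit appeal to the no-isolated-vertex convention is a point in your favor rather than a gap: for $L=\{p_{12}\}\subset L_3$ the set $L_3\setminus L$ is a poset ideal of $\Pi_3$, yet $G_L$ has the singleton maximal clique $\{3\}$, which is not an interval $[a,b]$ with $a<b$; so the lemma as stated does need that convention, a point the paper's proof passes over silently.
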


\begin{proof}
By virtue of \cite[Theorem~2.2]{EHH11}, a finite graph $G$ on $[n]$ is an interval graph if and only if the following condition is satisfied:

\medskip
\noindent$(*)$
If $\{i,j\}$ and $\{k,\ell\}$ are edges of $G$ with $i<j$ and $k<\ell$, then
\begin{itemize}
  \item $\{j,\ell\}$ is an edge of $G$ if $i=k$ and $j\neq \ell$, and
  \item $\{i,k\}$ is an edge of $G$ if $i\neq k$ and $j=\ell$.
\end{itemize}

First, suppose that $L_n \setminus L$ is a poset ideal of $\Pi_n$. 
Let $\{i,j\}$ and $\{k,\ell\}$, with $i<j$ and $k<\ell$, be edges of $G_L$.
If $i=k$ and $j<\ell$, then, since $L_n \setminus L$ is a poset ideal of $\Pi_n$, 
it follows that $\{k',\ell\}$ is an edge of $G_L$ for all $k' \geq k$.
In particular, $\{j,\ell\}$ is an edge of $G_L$.
If $i<k$ and $j=\ell$, then, again because $L_n \setminus L$ is a poset ideal of $\Pi_n$, 
it follows that $\{i,j'\}$ is an edge of $G_L$ for all $j' \leq j$.
In particular, $\{i,k\}$ is an edge of $G_L$.
Hence condition $(*)$ is satisfied, and $G_L$ is an interval graph.

Conversely, suppose that the finite graph $G_L$ on $[n]$ is an interval graph whose maximal cliques are
$
C_1=[a_1,b_1], \ldots, C_s=[a_s,b_s],
$
with $1=a_1<\cdots<a_s<n$.
Let $\{i,j\}$ with $i<j$ and $\{k,\ell\}$ with $k<\ell$ satisfy $i\geq k$ and $j\leq \ell$.
Suppose that $\{k,\ell\}$ is an edge of $G_L$, say $\{k,\ell\}\in C_t$.
Then $a_t \leq k < \ell \leq b_t$.
Since $i\geq k$ and $j\leq \ell$, we have
$
a_t \leq i < j \leq b_t,
$
and hence $\{i,j\} \in C_t$.
Thus $\{i,j\}$ is an edge of $G_L$.
This shows that $L_n \setminus L$ is a poset ideal of $\Pi_n$, as desired.
\end{proof}

\begin{Corollary}
    \label{distributive}    
    A subposet $L$ of $L_n$ is a compatible sublattice of $L_n$ if and only if $G_L$ is an interval graph. 
\end{Corollary}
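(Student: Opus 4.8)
The statement is essentially a repackaging of Lemma~\ref{Sydney} together with two extra observations, so the plan is to isolate exactly what Lemma~\ref{Sydney} already provides and then supply what is missing. By definition $L$ is a compatible sublattice precisely when (a) $L$ is a sublattice of $L_n$, (b) $L$ has full rank, i.e.\ every index of $[n]$ occurs among the pairs $\{i,j\}$ with $p_{ij}\in L$, and (c) $L_n\setminus L$ is a poset ideal of $\Pi_n$. Lemma~\ref{Sydney} says that (c) is equivalent to $G_L$ being an interval graph. Hence the forward implication is immediate: if $L$ is a compatible sublattice, then (c) holds, and Lemma~\ref{Sydney} yields that $G_L$ is an interval graph. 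For the converse I would assume that $G_L$ is an interval graph, invoke Lemma~\ref{Sydney} to recover (c), and then prove that (a) and (b) follow automatically.

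The heart of the argument is deducing the sublattice property (a) from condition (c). Recall that $L_n$ is a distributive lattice in which meet and join are computed coordinatewise,
\[
p_{ij}\wedge p_{k\ell}=p_{\min(i,k),\,\min(j,\ell)},\qquad
p_{ij}\vee p_{k\ell}=p_{\max(i,k),\,\max(j,\ell)}.
\]
Condition (c) says exactly that $L$ is up-closed in $\Pi_n$: if $p_{ij}\in L$ and $p_{ij}\le_{\Pi_n}p_{k\ell}$, that is $i\le k$ and $j\ge \ell$, then $p_{k\ell}\in L$. I would then check closure under $\wedge$ and $\vee$ by selecting, for each operation, the correct one of the two given generators to push up in $\Pi_n$. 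For the meet $p_{\min(i,k),\min(j,\ell)}$, take the generator whose first index is $\min(i,k)$; say $i\le k$, so that $p_{ij}\le_{\Pi_n}p_{\min(i,k),\min(j,\ell)}$ because the first index is unchanged while the second only decreases, whence the meet lies in $L$. For the join $p_{\max(i,k),\max(j,\ell)}$, take instead the generator whose second index is $\max(j,\ell)$; say $\ell\ge j$, so that $p_{k\ell}\le_{\Pi_n}p_{\max(i,k),\max(j,\ell)}$ because the second index is unchanged while the first only increases, whence the join lies in $L$. Along the way one checks that these coordinatewise minima and maxima are genuine elements of $L_n$, namely that the first index stays strictly below the second, which is automatic from $i<j$ and $k<\ell$. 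This establishes (a).

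Condition (b) is the easy part: requiring that $L$ have full rank amounts to requiring that every vertex of $[n]$ be incident to some edge of $G_L$, that is, that $G_L$ have no isolated vertices, which is built into the standing convention of Section~\ref{sec: 3} that $G_L$ is a graph on $[n]$. Combining (a), (b), and (c) shows that $L$ is a compatible sublattice, completing the converse. The only real obstacle is the sublattice verification of the second paragraph: the subtlety is that being up-closed in $\Pi_n$, whose order twists the second coordinate, is a priori unrelated to being closed under the untwisted coordinatewise operations of $L_n$. The argument succeeds only because for the meet one can hold the first coordinate fixed while freely lowering the second, and dually for the join; once the right generator is chosen in each case, the closure becomes a one-line application of up-closedness.
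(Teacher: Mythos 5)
Your proof is correct and follows the same overall skeleton as the paper's: both directions are ultimately routed through Lemma~\ref{Sydney}. The one real difference is how the sublattice property is obtained in the converse. The paper argues directly from the interval-graph structure: for nested edges $\{i,\ell\}$ and $\{j,k\}$ with $i<j<k<\ell$, the maximal clique containing $\{i,\ell\}$ is an interval, hence contains $j$ and $k$, so $\{i,k\}$ and $\{j,\ell\}$ are edges, i.e.\ $p_{i\ell}\wedge p_{jk}$ and $p_{i\ell}\vee p_{jk}$ lie in $L$; only afterwards is Lemma~\ref{Sydney} invoked to obtain the poset-ideal condition. You instead invoke Lemma~\ref{Sydney} first and derive closure under $\wedge$ and $\vee$ purely from up-closedness in $\Pi_n$, choosing for each operation the generator that can be pushed up. Your route yields a slightly stronger intermediate fact --- any subposet of $L_n$ whose complement is a poset ideal of $\Pi_n$ is automatically a sublattice --- which makes transparent that the ``sublattice'' hypothesis is never genuinely at issue; the paper's route is marginally more self-contained in that it does not reuse Lemma~\ref{Sydney} for this step. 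Both verifications are correct.

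One caveat concerns your condition (b). You gloss ``$L$ has rank $n$'' as ``every index of $[n]$ occurs in some pair of $L$'', i.e.\ $G_L$ has no isolated vertices. That is not the paper's definition of rank (longest chain length), and under the literal definition the converse of the corollary is actually false: for the chain $L=\{p_{12},p_{23},\dots,p_{n-1,n}\}$ the graph $G_L$ is a path, hence an interval graph, yet $\rank(L)=n-2$. So what you prove for (b) is a reasonable repair of the definition, not a verification of the stated rank clause. You are in good company here: the paper's own proof never addresses the rank condition at all, and the clause ``rank $n$'' sits uneasily with the rest of the paper (perfect compatible sublattices have rank $2n-4$, which would force $n=4$ if compatibility literally required rank $n$). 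This is a defect of the paper rather than of your argument, but you should state explicitly that you are reading ``rank $n$'' as ``$G_L$ has no isolated vertices'' rather than as the paper's chain-length rank, since silently substituting one condition for the other is exactly the kind of gap a referee would flag.
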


\begin{proof}
{\bf (``only if'')} This is a direct consequence of Lemma \ref{distributive}. 

{\bf (``if'')} Suppose that \(G_L\) is an interval graph. Let \(\{i,\ell\}\) and \(\{j,k\}\) be edges of \(G_L\) with \(i<j<k<\ell\). Since \(G_L\) is an interval graph, it follows that \(\{i,k\}\) and \(\{j,\ell\}\) are also edges of \(G_L\). Hence both \(p_{i\ell} \wedge p_{jk}\) and \(p_{i\ell} \vee p_{jk}\) belong to \(L\). In other words, \(L\) is a sublattice of \(L_n\). It then again follows from Lemma~\ref{distributive} that \(L\) is a compatible sublattice of \(L_n\).
\end{proof}

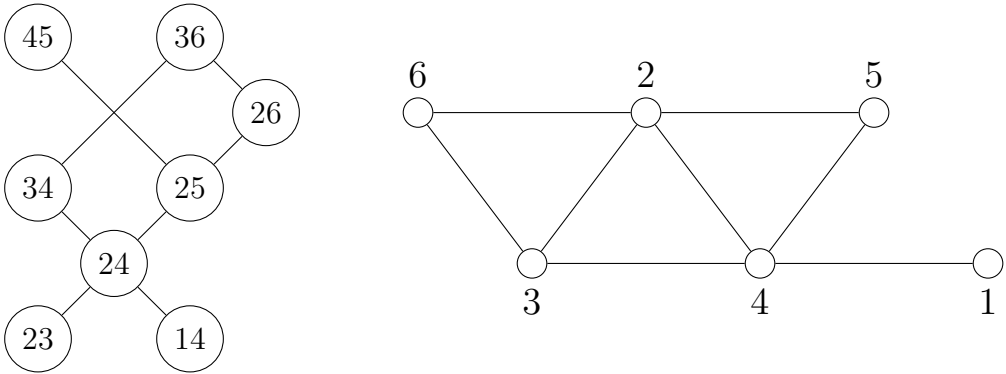
\begin{figure}
    \centering
\begin{tikzpicture}
\node[draw,shape=circle] (14) at (2,2) {14};
\node[draw,shape=circle] (23) at (0,2) {23};
\node[draw,shape=circle] (24) at (1,3) {24};
\node[draw,shape=circle] (25) at (2,4) {25};
\node[draw,shape=circle] (26) at (3,5) {26};
\node[draw,shape=circle] (34) at (0,4) {34};
\node[draw,shape=circle] (36) at (2,6) {36};
\node[draw,shape=circle] (45) at (0,6) {45};
\node[draw,shape=circle] (1) at (12.5,3) {};
\node[draw,shape=circle] (2) at (8,5) {};
\node[draw,shape=circle] (3) at (6.5,3) {};
\node[draw,shape=circle] (4) at (9.5,3) {};
\node[draw,shape=circle] (5) at (11,5) {};
\node[draw,shape=circle] (6) at (5,5) {};
\draw (45)--(25);
\draw (34)--(36)--(26)--(25)--(24)--(23);
\draw (34)--(24)--(14);
\draw (6)node[above=2mm]{\large{6}}--(2)node[above=2mm]{\large{2}}--(5)node[above=2mm]{\large{5}}--(4)node[below=2mm]{\large{4}}--(2)--(3)node[below=2mm]{\large{3}}--(6);
\draw (3)--(4)--(1)node[below=2mm]{\large{1}};
\end{tikzpicture}
    \caption{A subposet $L$ of $L_6$ (left) and its finite graph $G_{L}$ (right).}
    \label{fig: subposet_graph}
\end{figure}

\begin{Lemma}
    \label{LEMMAperfect}
    Let $L$ be a sublattice of $L_n$ and suppose that $G_L$ is an interval graph whose maximal cliques are $C_1 =[a_1, b_1], \ldots, C_s=[a_s, b_s]$ with $1=a_1 < \cdots < a_s < n$.  Then the compatible sublattice $L$ of $L_n$ is perfect if and only if 
    $$|C_i \cap C_{i+1}| \geq 2 \quad \text{for each } \; 1 \leq i < s.$$ 
\end{Lemma}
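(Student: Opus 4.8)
The plan is to reduce everything to the combinatorial characterization of perfection already recorded above, namely that $L$ is perfect if and only if $p_{i,i+1}\in L$ for all $1\le i\le n-1$ and $p_{i,i+2}\in L$ for all $1\le i\le n-2$. Translating through the dictionary $p_{ij}\in L \iff \{i,j\}$ is an edge of $G_L$, the task becomes purely a statement about the interval structure of $G_L$: every ``short'' pair $\{i,i+1\}$ and $\{i,i+2\}$ is an edge of $G_L$ if and only if $|C_t\cap C_{t+1}|\ge 2$ for all $1\le t<s$.

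First I would record the basic shape of the maximal cliques. From maximality no containment $C_i\subseteq C_j$ is possible for $i\ne j$, so $a_1<\cdots<a_s$ forces $b_1<\cdots<b_s$. Since $G_L$ has no isolated vertices the intervals cover $[n]$, which gives $b_s=n$ and $a_{t+1}\le b_t+1$ for each $t$; in particular the overlap $o_t:=|C_t\cap C_{t+1}|=b_t-a_{t+1}+1$ is $\ge 0$. I would also use the elementary fact that $\{i,j\}$ with $i<j$ is an edge of $G_L$ precisely when $a_t\le i$ and $j\le b_t$ hold for one single index $t$.

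For the ``if'' direction I assume $o_t\ge 2$ for all $t$ and take any pair $\{i,j\}$ with $j-i\le 2$. Choosing $t$ minimal with $b_t\ge j$ (possible since $b_s=n$), minimality gives $b_{t-1}\le j-1$ when $t\ge 2$, and then $o_{t-1}\ge 2$ yields $a_t\le b_{t-1}-1\le j-2\le i$; the case $t=1$ is immediate from $a_1=1$. Either way $i,j\in[a_t,b_t]$, so every short pair is an edge and $L$ is perfect. For the ``only if'' direction I argue by contraposition: if some $o_{t_0}\le 1$, I exhibit a missing short edge at the right endpoint $b_{t_0}$. When $o_{t_0}=0$ (so $a_{t_0+1}=b_{t_0}+1$) the pair $\{b_{t_0},b_{t_0}+1\}$ is a non-edge, giving $p_{b_{t_0},b_{t_0}+1}\notin L$; when $o_{t_0}=1$ (so $a_{t_0+1}=b_{t_0}$) the pair $\{b_{t_0}-1,b_{t_0}+1\}$ is a non-edge, giving a missing $p_{i,i+2}$. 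In each case the non-edge is verified by observing that any clique $C_r$ containing the larger index must have $r\ge t_0+1$, hence left endpoint $a_r\ge a_{t_0+1}$ too large to also contain the smaller index; the bounds $b_{t_0}<b_s=n$ and $b_{t_0}>a_{t_0}\ge 1$ keep the indices inside $\{1,\dots,n\}$.

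I do not anticipate a serious obstacle; the content is bookkeeping with interval endpoints. The one point demanding care is the ``if'' direction: a short pair straddling a clique boundary, such as $\{b_t,b_t+2\}$, need not lie in $C_{t+1}$ when $b_{t+1}=b_t+1$, so one cannot simply inspect the two adjacent cliques. Selecting the clique through the minimality condition $t=\min\{r:b_r\ge j\}$ and invoking only the single overlap inequality $o_{t-1}\ge 2$ sidesteps this difficulty and treats all short pairs uniformly. The remaining care is the boundary check that the exhibited non-edges genuinely correspond to variables $p_{i,i+1}$ or $p_{i,i+2}$ with $i$ in the admissible range, which follows from $1\le b_{t_0}\le n-1$.
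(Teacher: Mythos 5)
Your proof is correct and takes essentially the same approach as the paper's: both reduce perfection to the presence of all short edges $\{i,i+1\}$ and $\{i,i+2\}$ in $G_L$ and then do interval-endpoint bookkeeping on the maximal cliques. The only cosmetic differences are that the paper selects the \emph{largest} clique containing $j$ (rather than your minimal $t$ with $b_t \ge j$) in the ``if'' direction, and in the ``only if'' direction it uses the single witness $\{b_{i_0}-1,\, b_{i_0}+1\}$, which in fact works for both of your overlap cases $o_{t_0}=0$ and $o_{t_0}=1$.
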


\begin{proof}
Suppose that $|C_i \cap C_{i+1}| \geq 2$ for each $1 \leq i < s$.
Since $C_i \cap C_{i+1} \neq \emptyset$ for all $1 \leq i < s$, it follows that
$\{j,j+1\}$ is an edge of $G$ for each $1 \leq j < n$.
Fix $1 \leq j \leq n-2$, and choose the largest index $i$ such that $j \in C_i$.
Since $|C_i \cap C_{i+1}| \geq 2$, we have $a_i \leq j \leq b_i - 2$.
Hence $j+2 \in C_i$, and therefore $\{j,j+2\}$ is an edge of $G$.

On the other hand, if there exists an index $1 \leq i_0 < s$ such that
$|C_{i_0} \cap C_{i_0+1}| \leq 1$, then
$\{b_{i_0}-1,\, b_{i_0}+1\}$ cannot be an edge of $G$.
Consequently, $L$ cannot be perfect.
\end{proof}

\begin{Example}
Let $n=5$.
The interval graphs on $[5] = \{ 1,2,3,4,5\}$ satisfying the condition of
Lemma~\ref{LEMMAperfect} are
$$
\{[1,5]\},\ \{[1,4],[2,5]\},\ \{[1,4],[3,5]\},\ \{[1,3],[2,5]\},\ 
\{[1,3],[2,4],[3,5]\},
$$
where, for example, $\{[1,4],[3,5]\}$ denotes the interval graph on $[5]$
whose maximal cliques are $[1,4]$ and $[3,5]$.
\end{Example}

\begin{Remark}
Following the last paragraph of the previous section, the number of perfect compatible sublattices of \(L_n\) is the Catalan number \(C_{n-2}\). It then follows that the number of interval graphs \(G\) satisfying the condition of Lemma~\ref{LEMMAperfect} is also the Catalan number \(C_{n-2}\).
\end{Remark}

We now turn to the discussion of the condition for \(\mathcal{R}_{\mathbb{K}}[L]\) to be Gorenstein. Recall that an element \(a\) of a finite distributive lattice \(L\) is called \emph{join‑irreducible} if, whenever \(a = b \vee c\) with \(b, c \in L\), one has \(a = b\) or \(a = c\). The unique minimal element of \(L\) cannot be join‑irreducible.
\begin{Example}
 The join-irreducible elements of the finite distributive lattice~${L \subset L_7}$ illustrated in
Figure~\ref{fig: fundamental_chain} are
$$
p_{13},\, p_{14},\, p_{15},\, p_{26},\, p_{47},\, p_{23},\, p_{34},\, p_{45},\, p_{56},\, p_{67}.
$$  
\end{Example}

\begin{Lemma}
    \label{Gor}
    Let $L$ be a sublattice of $L_n$ and suppose that $G_L$ is an interval graph whose maximal cliques are $C_1 =[a_1, b_1], \ldots, C_s=[a_s, b_s]$ with $1=a_1 < \cdots < a_s < n$.  
Let $P$ be the subposet of $L$ consisting of all join-irreducible elements of $L$.  Then $P$ is pure if and only if $|C_i \cap C_{i+1}| \leq 3$ for each $1 \leq i < s$.  
\end{Lemma}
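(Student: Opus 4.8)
My plan is to describe the poset $P$ of join-irreducible elements of $L$ completely, reduce purity to gradedness, and then show that the only obstruction to gradedness sits at a family of ``corner'' elements, where the bound $|C_i\cap C_{i+1}|\le 3$ is precisely the deciding inequality.

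First I would pin down the join-irreducibles. Set $A(j):=\min\{a_t : b_t\ge j\}$, a non-decreasing function equal to $a_t$ for the least $t$ with $b_t\ge j$; then $p_{ij}\in L$ if and only if $A(j)\le i\le j-1$. Examining the lower covers of $p_{ij}$ in $L$—one may pass to $p_{i-1,j}$ exactly when $i>A(j)$, and to $p_{i,j-1}$ exactly when $j\ge i+2$—and checking that a lower cover lowering both indices can occur only for a diagonal element $p_{i,i+1}$, I obtain that the join-irreducibles of $L$ are of two kinds: the \emph{corner} elements $\lambda_j:=p_{A(j),j}$ with $A(j)\le j-2$, and the \emph{diagonal} elements $\delta_i:=p_{i,i+1}$ (every $p_{i,i+1}\in L$ with $i>A(i+1)$ is join-irreducible; the borderline indices $i=A(i+1)$, namely the minimum $p_{12}$ and those at an overlap of size $\le 1$, are handled directly and merely glue on a chain). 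The $\lambda_j$ form a chain and the $\delta_i$ form a chain, with cross-relations $\lambda_j\le\delta_i\iff i\ge j-1$ and $\delta_i\le\lambda_j\iff i\le A(j)$; hence $P$ is a union of two chains, with unique minimal element $p_{13}$ and unique maximal element $p_{n-1,n}$ (barring the degenerate cliques $|C_1|=2$ or $|C_s|=2$, treated separately).

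Because $P$ is bounded, it is pure if and only if it is graded, i.e. every element has all of its lower covers at a common height. From the cover description, the only elements that can carry two lower covers lying far apart are the corners $\lambda_{b_t+1}=p_{a_{t+1},\,b_t+1}$ situated just past $C_t$: writing $o_t:=|C_t\cap C_{t+1}|=b_t-a_{t+1}+1$, one checks that $\delta_{a_{t+1}}\lessdot\lambda_{b_t+1}$ as soon as $o_t\ge 2$, whereas $\lambda_{b_t}\lessdot\lambda_{b_t+1}$ holds if and only if $o_t\ge 3$ (for smaller overlaps a diagonal element lies strictly between, and $\lambda_{b_t+1}$ then has a unique lower cover). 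The decisive comparison is between the two maximal chains of the interval $[\delta_{a_{t+1}},\delta_{b_t}]$: the diagonal chain $\delta_{a_{t+1}}\lessdot\cdots\lessdot\delta_{b_t}$, of length $o_t-1$, and the two-step shortcut $\delta_{a_{t+1}}\lessdot\lambda_{b_t+1}\lessdot\delta_{b_t}$. These lengths agree precisely when $o_t\le 3$ (and when $o_t\le 2$ the diagonal itself passes through this very corner).

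This yields both directions. For the ``only if'' part I would assume some $o_t\ge 4$ and splice the diagonal chain, respectively the shortcut, of $[\delta_{a_{t+1}},\delta_{b_t}]$ with one fixed saturated chain from $p_{13}$ up to $\delta_{a_{t+1}}$ and one from $\delta_{b_t}$ up to $p_{n-1,n}$; the two resulting maximal chains of $P$ differ in length by $o_t-3\ge 1$, so $P$ is not pure. For the ``if'' part I would assume every $o_t\le 3$ and prove $P$ graded by induction on the number $s$ of maximal cliques: deleting $C_s$ leaves the join-irreducible poset of the sublattice on $[1,b_{s-1}]$ with cliques $C_1,\dots,C_{s-1}$, which is pure by induction, and the bound $o_{s-1}\le 3$ forces the newly attached corner $\lambda_{b_{s-1}+1}$ to have its two lower covers $\delta_{a_s}$ and $\lambda_{b_{s-1}}$ at the same height, so the height function extends without a jump. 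The main obstacle is exactly this last step: controlling the height function across all of $P$ and proving that, under $o_t\le 3$ everywhere, no element ever acquires two lower covers of differing height. The careful bookkeeping at the corner elements, together with the boundary overlaps $o_t\in\{1,2\}$ and the both-index-lowering covers of the diagonal elements, is where essentially all of the work lies.
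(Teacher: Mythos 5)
Your proposal is correct and follows essentially the same route as the paper: an explicit description of the join-irreducibles, induction on the number of maximal cliques for sufficiency (delete $C_s$, then check that the newly attached corner's two lower covers sit at equal height when the last overlap is at most $3$), and, for necessity, the interval $[\delta_{a_{t+1}},\delta_{b_t}]$ whose diagonal chain (length $o_t-1$) and shortcut through $\lambda_{b_t+1}$ (length $2$) have different lengths once $o_t\ge 4$ --- exactly the paper's non-pure interval $[p_{a,a+1},p_{b,b+1}]$. The height bookkeeping you defer is where the remaining work lies, but your key claim there is sound (in the graded bounded poset $P'$, the elements $\delta_{a_s}$ and $\lambda_{b_{s-1}}$ are both lower covers of the old maximum $\delta_{b_{s-1}-1}$, hence automatically of equal height), so the plan closes in the same way the paper's induction does.
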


\begin{proof}
One has 
$
L=\{p_{ij} : i<j, \, i\in C_q, \, j \in C_q, \, 1 \leq q \leq s \}.  
$

\smallskip

\noindent
{\bf (``if'')} Suppose that $|C_i \cap C_{i+1}| \leq 3$ for each $1 \leq i < s$.
We show, by induction on $s$, that the subposet $P$ of $L$ consisting
of all join-irreducible elements of $L$ is pure.

Let $s = 1$. Then the join-irreducible elements of $L$ are
$p_{1j}$ with $3 \leq j \leq b_1$ and $p_{j,j+1}$ with
$2 \leq j < b_1$.
Since the partial order of $P$ is
\begin{eqnarray*}
&p_{12} < p_{13} < p_{23} < \cdots < p_{b_1-2, b_1 - 1} < p_{b_1-1,b_1},&\\
&p_{12} < p_{13} < p_{14} < \cdots < p_{1b_1} < p_{b_1-1,b_1},&\\
& p_{1j} < p_{j-1,j},
\end{eqnarray*}
for $4 \leq j \leq b_1$, it follows that the poset $P$ is pure.

Let $s > 2$, and let $G'$ be the subgraph of $G_L$ whose maximal cliques
are $C_1, \ldots, C_{s-1}$. We denote by $L'$ a sublattice of $L$ corresponding to the graph $G'$.
Suppose that the subposet $P'$ of $L'$ consisting of all
join-irreducible elements of $L'$ is pure.
The maxi\-mal element of $P'$ is $p_{b_{s-1}-1, b_{s-1}}$.
The poset $P$ is obtained from $P'$ by adding $p_{a_s, j}$ with
$b_{s-1} < j \leq b_s$ and $p_{j,j+1}$ with
$b_{s-1} \leq j < b_s$.
The partial order of $P\setminus P'$ is
\begin{eqnarray*}
&p_{a_s, b_{s-1}+1} < p_{a_s, b_{s-1}+2} < \cdots < p_{a_s, b_s},&\\
&p_{b_{s-1}, b_{s-1}+1} < p_{b_{s-1}+1, b_{s-1}+2}
< \cdots < p_{b_s-1,b_s},&\\
& p_{a_s,j} < p_{j-1,j}, 
\end{eqnarray*}
for $b_{s-1} < j \leq b_s$, which is pure.
Let $|C_{s-1} \cap C_s| = 3$. Then
\begin{eqnarray*}
&p_{a_s, a_s+1} < p_{b_{s-1}-1,b_{s-1}} < p_{b_{s-1}, b_{s-1}+1},&\\
&p_{a_s, a_s+1} < p_{a_s, b_{s-1}+1}.&
\end{eqnarray*}
Now the induction hypothesis guarantees that $P$ is pure.
A similar argument shows that $P$ is pure if
$|C_{s-1} \cap C_s| < 3$.

\smallskip

\noindent
{\bf (``only if'')} Suppose that $|C_{s-1} \cap C_s| \geq 4$.
Let
$C_{s-1} = \{\ldots, a-1, a, a+1, \ldots, b\}$ and
$C_s = \{a, a+1, \ldots, b, \ldots, n\}$ with $b - a \geq 3$.
Then the interval $[p_{a,a+1},\, p_{b,b+1}]$ is 
\begin{eqnarray*}
&p_{a,a+1} < p_{a+1,a+2} < p_{a+2,a+3} < \cdots < p_{b,b+1},&\\
&p_{a,a+1} < p_{a,b+1} < p_{b,b+1},&
\end{eqnarray*}
which is not pure, since $b - a \geq 3$.
Thus $P$ cannot be pure, as required.
\end{proof}

We now turn to the extension of Theorem~\ref{compatible}.

\begin{Theorem}
    \label{Th_Gorenstein}
A sublattice $L$ of $L_n$ is a compatible sublattice of $L_n$ if and only if $G_L$ is an interval graph.  Furthermore, if $G_L$ is an interval graph whose maximal cliques are $C_1 =[a_1, b_1], \ldots, C_s=[a_s, b_s]$ with $1=a_1 < \cdots < a_s < n$, then $\Rc_K(L)$ is Gorenstein if and only if $|C_i \cap C_{i+1}| \leq 3$ for each $1 \leq i < s$. 
\end{Theorem}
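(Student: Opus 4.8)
The first assertion is precisely Corollary~\ref{distributive}, so I would invoke it directly. For the Gorenstein criterion, the plan is to reduce the statement to the purity of the poset $P$ of join-irreducible elements of $L$ (the poset appearing in Lemma~\ref{Gor}), after which Lemma~\ref{Gor} immediately supplies the equivalence with the condition $|C_i\cap C_{i+1}|\le 3$. By Theorem~\ref{compatible}, $\Rc_\mathbb{K}[L]$ is a Cohen--Macaulay ASL on the distributive lattice $L$, and by Birkhoff's theorem $L=\mathcal{J}(P)$. Thus the heart of the matter is the equivalence
\[
\Rc_\mathbb{K}[L]\ \text{is Gorenstein}\quad\Longleftrightarrow\quad P\ \text{is pure.}
\]

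To access $P$ I would compare $\Rc_\mathbb{K}[L]$ with the Hibi ring $\Hc_\mathbb{K}[L]$ on the same lattice, that is, the normal Cohen--Macaulay Ehrhart ring of the order polytope $\mathcal{O}(P)$, defined by the binomials $p_{i\ell}p_{jk}-p_{ik}p_{j\ell}$ obtained by deleting the lowest term $p_{ij}p_{k\ell}$ from each Plücker relation $Q_{ijk\ell}$ in \eqref{QQQ} (note $p_{ik}=p_{i\ell}\wedge p_{jk}$ and $p_{j\ell}=p_{i\ell}\vee p_{jk}$ in $L$). Choosing the weight $w(p_{ab})=-a$ makes the two leading terms of each $Q_{ijk\ell}$ tie in weight while $p_{ij}p_{k\ell}$ is strictly lighter, so that $\ini_w(Q_{ijk\ell})=p_{i\ell}p_{jk}-p_{ik}p_{j\ell}$; since the $Q_{ijk\ell}$ already form a Gröbner basis of $I_L$ (Lemma~\ref{lex}), one checks that $\ini_w(I_L)$ is exactly the Hibi ideal, exhibiting $\Hc_\mathbb{K}[L]$ as a flat degeneration of $\Rc_\mathbb{K}[L]$ with the same Hilbert series. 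For the direction ``$P$ pure $\Rightarrow$ Gorenstein'' I would then combine Hibi's theorem (the Hibi ring $\Hc_\mathbb{K}[L]$ is Gorenstein exactly when $P$ is pure) with the openness of the Gorenstein locus in a flat family: if the special fibre $\Hc_\mathbb{K}[L]$ is Gorenstein, then so is the general fibre $\Rc_\mathbb{K}[L]$.

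The reverse implication is the delicate one, since the Gorenstein property does \emph{not} descend through a degeneration, so I cannot simply transport non-Gorensteinness from $\Hc_\mathbb{K}[L]$ back to $\Rc_\mathbb{K}[L]$. Instead I would argue through a Hilbert-series invariant: a Gorenstein graded algebra has a symmetric $h$-vector, and the $h$-vector depends only on the Hilbert series, which $\Rc_\mathbb{K}[L]$ and $\Hc_\mathbb{K}[L]$ share. Hence $\Hc_\mathbb{K}[L]$ would have a symmetric $h$-vector, and by Hibi's palindromic theorem for Cohen--Macaulay Ehrhart rings this forces $\mathcal{O}(P)$ to be a Gorenstein polytope, equivalently $P$ to be pure. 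Combining the two implications with Lemma~\ref{Gor} yields the stated criterion.

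The main obstacle is precisely this ``only if'' step: the whole argument has to route through the symmetry of the $h$-vector rather than through the degeneration itself, because Gorensteinness is only an open (not closed) condition in the flat family. A technically lighter alternative, if one is willing to quote more from the literature, is to cite directly that an ASL domain on a distributive lattice $L$ is Gorenstein if and only if $\mathcal{J}^{-1}(L)$ is pure; with that in hand, Theorem~\ref{Th_Gorenstein} follows at once from Corollary~\ref{distributive} and Lemma~\ref{Gor}.
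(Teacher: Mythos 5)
Your proposal is correct, and its main body takes a genuinely different route from the paper --- whose proof is, in fact, precisely the ``technically lighter alternative'' you mention in your last paragraph. The paper argues in three steps: $\Rc_{\mathbb{K}}[L]$ is a Cohen--Macaulay domain by Theorem~\ref{compatible}; by Stanley's criterion \cite[Theorem 4.4]{Stanley78}, Gorensteinness of a graded Cohen--Macaulay domain is detected by a symmetry of its Hilbert series; and since the Hilbert series of any ASL on $L$ depends only on $L$ (by (ASL-1) the standard monomials are a basis), Hibi's criterion \cite[p.~107]{Hibi87} applies verbatim to give that $\Rc_{\mathbb{K}}[L]$ is Gorenstein if and only if the poset $P$ of join-irreducibles of $L$ is pure; Lemma~\ref{Gor} then concludes, exactly as in your reduction. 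Your route instead realizes the comparison with the Hibi ring $\Hc_{\mathbb{K}}[L]$ geometrically, as a flat toric degeneration, and transfers Gorensteinness across the family. This is valid: your weight computation is right, and $\ini_w(I_L)$ does equal the Hibi ideal of $L$ --- though the correct justification is the Hilbert-function comparison (both quotients have the standard monomials of $L$ as a $\mathbb{K}$-basis), not Lemma~\ref{lex}, which concerns $<_{\lex}$ and $I_{L_n}$ and is miscited here. Two simplifications are worth noting. First, the degeneration is not needed to see that $\Rc_{\mathbb{K}}[L]$ and $\Hc_{\mathbb{K}}[L]$ share a Hilbert series: both are ASLs on $L$, so this is immediate. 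Second, your asymmetric handling of the two directions (openness of the Gorenstein locus one way, $h$-vector symmetry the other) is unnecessary: since both rings are Cohen--Macaulay domains with equal Hilbert series, Stanley's criterion transfers Gorensteinness in both directions at once, which is exactly the paper's shortcut. What your heavier argument buys is an explicit flat family degenerating the projected Pl\"ucker algebra to the Hibi ring --- a statement of independent interest that the paper never makes explicit; what the paper's argument buys is brevity and independence from deformation-theoretic openness.
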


\begin{proof}
Since $\mathcal{R}(G)=S/I_G$ is a Cohen--Macaulay integral domain (Theorem \ref{compatible}), by virtue of \cite[Theorem 4.4]{Stanley78}, the criterion on \cite[p.~107]{Hibi87} applies to $\mathcal{R}(G)=S/I_G$.  In other words, $\mathcal{R}(G)=S/I_G$ is Gorenstein if and only if the subposet $P$ of $L$ which consists of all join-irreducible elements of $L$ is pure.  It then follows from Lemma \ref{Gor} that $\mathcal{R}(G)=S/I_G$ is Gorenstein if and only if $|C_i \cap C_{i+1}| \leq 3$ for each $1 \leq i < s$. 
\end{proof}

\begin{Corollary}
    \label{Cor_Gorenstein}
A sublattice $L$ of $L_n$ is a perfect compatible sublattice of $L_n$ for which $\Rc_K(L)$ is Gorenstein if and only if $G_L$ is an interval graph whose maximal cliques $C_1 =[a_1, b_1], \ldots, C_s=[a_s, b_s]$ with $1=a_1 < \cdots < a_s < n$ satisfy 
$$2 \leq |C_i \cap C_{i+1}| \leq 3 \quad \text{for each } \; 1 \leq i < s.$$ 
\end{Corollary}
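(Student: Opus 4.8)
The plan is to obtain this statement as a direct conjunction of the three combinatorial characterizations already in place, since the property ``perfect compatible sublattice of $L_n$ with $\Rc_\mathbb{K}[L]$ Gorenstein'' is, by definition, the logical conjunction of three separate conditions, each of which has already been translated into a clique condition on $G_L$. So I would not attempt any new computation; instead I would assemble the equivalences of Corollary~\ref{distributive}, Lemma~\ref{LEMMAperfect}, and Theorem~\ref{Th_Gorenstein} in the correct order.

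First I would record the decomposition: $L$ is a perfect compatible sublattice of $L_n$ for which $\Rc_\mathbb{K}[L]$ is Gorenstein precisely when (a) $L$ is a compatible sublattice of $L_n$, (b) $L$ is perfect, and (c) $\Rc_\mathbb{K}[L]$ is Gorenstein. By Corollary~\ref{distributive}, condition (a) holds if and only if $G_L$ is an interval graph; establishing (a) first is essential, because it is exactly the standing hypothesis under which the maximal cliques $C_1=[a_1,b_1],\ldots,C_s=[a_s,b_s]$ are defined and under which the remaining two results can even be invoked.

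With $G_L$ an interval graph in hand, I would then apply the two remaining results to the same clique data. Lemma~\ref{LEMMAperfect} rewrites condition (b) as the lower bound $|C_i \cap C_{i+1}| \geq 2$ for every $1 \leq i < s$, while Theorem~\ref{Th_Gorenstein} rewrites condition (c) as the upper bound $|C_i \cap C_{i+1}| \leq 3$ for every $1 \leq i < s$. Conjoining the two bounds yields exactly $2 \leq |C_i \cap C_{i+1}| \leq 3$ for each $1 \leq i < s$, which is the asserted criterion; the converse direction follows by reading the same three equivalences backwards.

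I expect no genuine obstacle here: the only care required is logical bookkeeping, namely ensuring that compatibility (equivalently, that $G_L$ is an interval graph) is secured before the clique inequalities are applied, since both Lemma~\ref{LEMMAperfect} and Theorem~\ref{Th_Gorenstein} presuppose it. The corollary is thus a clean synthesis of the section's results rather than a new argument.
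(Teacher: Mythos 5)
Your proposal is correct and is precisely the argument the paper intends: the corollary is stated without proof because it is the immediate conjunction of Corollary~\ref{distributive} (compatibility $\iff$ $G_L$ is an interval graph), Lemma~\ref{LEMMAperfect} (perfection $\iff$ $|C_i \cap C_{i+1}| \geq 2$), and Theorem~\ref{Th_Gorenstein} (Gorenstein $\iff$ $|C_i \cap C_{i+1}| \leq 3$), exactly as you assemble them. Your added care in securing the interval-graph hypothesis first, since the other two equivalences presuppose it, is the right logical bookkeeping and matches the paper's implicit reasoning.
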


Finally, we enumerate the number of Gorenstein rings
$\mathcal{R}_{\mathbb{K}}[L]$ with
$\dim \mathcal{R}_{\mathbb{K}}[L] = 2n - 3$, where $L$ is a perfect
compatible sublattice of $L_n$.
First, we define the sequences $\{p_n\}_{n=0}^\infty$ and
$\{q_n\}_{n=0}^\infty$ by the recurrence relations
\begin{eqnarray}
\label{recurrence}
p_{n+1} = 2p_n + q_n, \quad
q_{n+1} = p_n + q_n, \quad n = 0,1,2,3,\ldots
\end{eqnarray}
together with the initial conditions $p_0 = 1$ and $q_0 = 1$.

\begin{Lemma}
  \label{enumeration}
  Let $n \geq 4$.  The number of interval graphs on $[n] = \{1,2,\dots,n\}$ satisfying the condition of Corollary \ref{Cor_Gorenstein} is $p_{n-4}+q_{n-4}$.
\end{Lemma}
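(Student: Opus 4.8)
The plan is to count these graphs by encoding each one through the combinatorial data of its maximal cliques, turning the problem into a two‑state transfer/generating‑function computation, and then matching the resulting linear recurrence with the sequence $p_n+q_n$.

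First I would fix the combinatorial model. By Corollary~\ref{Cor_Gorenstein} (together with Corollary~\ref{distributive} and Lemma~\ref{LEMMAperfect}), the objects to be counted are exactly the interval graphs on $[n]$ whose maximal cliques $C_1=[a_1,b_1],\dots,C_s=[a_s,b_s]$ satisfy $1=a_1<\cdots<a_s$, $b_1<\cdots<b_s=n$, and $2\le|C_i\cap C_{i+1}|\le 3$ for $1\le i<s$. Write $o_i:=|C_i\cap C_{i+1}|\in\{2,3\}$, and for $i\ge 2$ let $e_i:=b_i-b_{i-1}\ge 1$ be the number of vertices that $C_i$ adds on the right, with $e_1:=b_1=|C_1|$; then $n=e_1+\cdots+e_s$. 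Using $a_{i+1}=b_i-o_i+1$, the only non‑containment requirement $a_i<a_{i+1}$ becomes the nearest‑neighbour constraint $e_i>o_i-o_{i-1}$ (with the conventions $o_0=0$ for $i=1$ and no right constraint on $e_s$); concretely, $e_i\ge 2$ exactly when $o_{i-1}=2$ and $o_i=3$, and $e_i\ge 1$ otherwise, while $e_1\ge o_1+1$. I would verify that this gives a bijection between the graphs in question and the data $\bigl(s;(o_i)_{1\le i<s};(e_i)_{1\le i\le s}\bigr)$.

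Next I would run a transfer argument whose state is the value of the last overlap, tracking $n$ by a variable $x$. Grouping each clique $C_i$ ($1\le i\le s-1$) with the transition into the state $o_i$ and closing with $C_s$, and writing $f(n)$ for the desired count, the length‑generating function is
\[
F(x)=\sum_{n\ge 2}f(n)\,x^{n}=\frac{x^{2}}{1-x}+I\,(\mathrm{Id}-T)^{-1}K ,
\]
where the first summand is the $s=1$ contribution and $I,T,K$ are the initial row vector, the $2\times 2$ transfer matrix, and the closing column vector over the states $\{2,3\}$, each entry being a geometric series $x^{\max(1,\,\cdot)}/(1-x)$ dictated by the constraint above (the $2\to 3$ entry carries an extra factor of $x$, reflecting $e_i\ge 2$). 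After the substitution $y=x/(1-x)$ the relevant bracket collapses via $1-y+xy+x=1$, and the rational function simplifies to
\[
F(x)=\frac{x^{2}(1-2x)}{1-3x+x^{2}} .
\]
Reading off coefficients yields $f(n)=3f(n-1)-f(n-2)$ for all $n\ge 4$, together with $f(4)=2$ and $f(5)=5$.

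Finally I would compare with the auxiliary sequence. From \eqref{recurrence} one has $q_{m+1}=p_m+q_m$, so putting $g_m:=p_m+q_m$ gives $q_{m+1}=g_m$, $p_m=g_m-g_{m-1}$, and hence $g_{m+1}=3g_m-g_{m-1}$ with $g_0=2$ and $g_1=5$. Since $h_m:=f(m+4)$ and $g_m$ obey the same recurrence for $m\ge 2$ and agree at $m=0,1$, induction gives $h_m=g_m$, i.e. $f(n)=g_{n-4}=p_{n-4}+q_{n-4}$ for $n\ge 4$. The main obstacle is the first step: pinning down the bijection and, in particular, identifying the exact nearest‑neighbour constraint together with the boundary contributions (the $s=1$ term and the condition $e_1\ge o_1+1$), since these are precisely what determine the numerator of $F(x)$ and therefore the initial values that must match $p_{n-4}+q_{n-4}$. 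Once the transfer data is correct, the generating‑function simplification and the comparison with $\{p_m+q_m\}$ are routine.
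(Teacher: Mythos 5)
Your proof is correct, but it takes a genuinely different route from the paper's. The paper makes the identity true essentially by definition: it interprets $p_m$ (respectively $q_m$) as the number of graphs on $[m+4]$ satisfying the condition of Corollary~\ref{Cor_Gorenstein} whose rightmost maximal clique (the one containing the last vertex) has size $\geq 4$ (respectively $=3$), so that the total count is $p_{n-4}+q_{n-4}$ on the nose, and then verifies that these counting sequences satisfy the recurrence \eqref{recurrence} with $p_0=q_0=1$: passing from $[m+4]$ to $[m+5]$ one either enlarges the last clique by one vertex (possible from both states), attaches a new clique with overlap $2$ (possible from both states), or attaches a new clique with overlap $3$ (possible only when the last clique has size $\geq 4$, else the old clique would be contained in the new one), which gives exactly $p_{m+1}=2p_m+q_m$ and $q_{m+1}=p_m+q_m$. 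You never give $p_m,q_m$ a combinatorial meaning; instead you encode a graph by its overlap sequence $(o_i)$ and increment sequence $(e_i)$, compute $F(x)=x^2(1-2x)/(1-3x+x^2)$ by a transfer-matrix calculation over the states $\{2,3\}$, and match the resulting recurrence $f(n)=3f(n-1)-f(n-2)$, $f(4)=2$, $f(5)=5$ against $g_m=p_m+q_m$. Both are two-state transfer arguments, but with different state variables: yours is the last overlap, the paper's is the size class of the last clique, and the paper's transitions append one vertex while yours append a whole clique. Your route costs more computation (the boundary constraints $e_1\geq o_1+1$ and $e_i\geq 2$ exactly when $o_{i-1}=2,\,o_i=3$, the matrix inversion, the recurrence matching) but buys the explicit rational generating function and the clean second-order recurrence for the count itself, from which Corollary~\ref{enumeration_2} is immediate. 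For the record, your computations check out: the constraint derivation from $a_i<a_{i+1}$ is correct, the bracket $1-y+xy+x$ does collapse to $1$, $F(x)$ simplifies as claimed, and the initial values agree; the encoding bijection you flag as the main obstacle is sound because any clique of such a graph contains its minimal and maximal vertices as an edge, hence lies inside a single maximal interval clique, so the maximal cliques of the reconstructed graph are exactly the prescribed intervals.
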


\begin{proof}
Let $n \geq 0$.
Let $p_n$ (respectively $q_n$) denote the number of interval graphs $G$ on
$[n+4]$ satisfying the condition of Corollary~\ref{Cor_Gorenstein} for which
$|C| \geq 4$ (respectively $|C| = 3$), where $C$ is the maximal clique of $G$
with $n \in C$.
In particular, the number of interval graphs on $[n]$ satisfying the
condition of Corollary~\ref{Cor_Gorenstein} is $p_{n-4} + q_{n-4}$.
Now, one has $p_0 = q_0 = 1$, and it follows easily that the sequences
$\{p_n\}_{n=0}^\infty$ and $\{q_n\}_{n=0}^\infty$ satisfy the recurrence
formula~\eqref{recurrence}.
\end{proof}

Finally, by expressing $p_n$ and $q_n$ in terms of $n$, one obtains the following.
\begin{Corollary}
  \label{enumeration_2}
  Let $n \geq 4$.  The number of Gorenstein rings $\Rc_\mathbb{K}[L]$ with $\dim \Rc_\mathbb{K}[L] = 2n - 3$, where $L$ is a perfect compatible sublattice of $L_n$, is 
  \[
  \frac{1}{\,5\,}
  \left[
(5+2\sqrt{5}\,)\left(\frac{\,3+\sqrt{5}\,}{2}\right)^{n-4}
  +\,
(5-2\sqrt{5}\,)\left(\frac{\,3-\sqrt{5}\,}{2}\right)^{n-4}\,
  \right].
  \]
\end{Corollary}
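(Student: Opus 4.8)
The plan is to solve the linear recurrence system \eqref{recurrence} explicitly and read off the closed form for $p_{n-4}+q_{n-4}$, which by Lemma~\ref{enumeration} is exactly the quantity to be enumerated. Writing $v_n$ for the column vector with entries $p_n$ and $q_n$, the recurrence \eqref{recurrence} reads $v_{n+1}=Mv_n$ with
$$
M=\begin{pmatrix} 2 & 1 \\ 1 & 1 \end{pmatrix}.
$$
The characteristic polynomial of $M$ is $\lambda^2-3\lambda+1$, whose roots are $\lambda_\pm=\tfrac{3\pm\sqrt5}{2}$. These are precisely the bases appearing in the asserted formula, which already signals the correct route.

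Rather than diagonalize $M$, I would work directly with the sum $s_n:=p_n+q_n$, the quantity of interest. First I would establish the auxiliary identity $s_{n-1}=q_n$: subtracting the two relations in \eqref{recurrence} gives $p_n-q_n=p_{n-1}$, hence $q_{n-1}=q_n-p_{n-1}=2q_n-p_n$, and adding yields $s_{n-1}=p_{n-1}+q_{n-1}=q_n$. Combining this with $s_{n+1}=p_{n+1}+q_{n+1}=3p_n+2q_n$ (obtained by adding the two relations in \eqref{recurrence}) produces the second-order recurrence
$$
s_{n+1}=3s_n-s_{n-1},
$$
which is, as expected, the scalar recurrence attached to the characteristic polynomial of $M$.

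With the recurrence in hand, the remaining steps are routine. The initial data are $s_0=p_0+q_0=2$ and $s_1=p_1+q_1=3+2=5$. Writing the general solution as $s_n=A\lambda_+^n+B\lambda_-^n$ and imposing $A+B=2$ together with $A\lambda_++B\lambda_-=5$, one finds $A-B=4/\sqrt5$, so that $A=\tfrac{5+2\sqrt5}{5}$ and $B=\tfrac{5-2\sqrt5}{5}$. Substituting $n\mapsto n-4$ then yields exactly the displayed expression for $s_{n-4}=p_{n-4}+q_{n-4}$, completing the proof.

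There is no serious obstacle here: the argument is a standard solution of a constant-coefficient linear recurrence, and the only mild subtlety is the bookkeeping in matching the coefficients $A,B$ (equivalently, rationalizing $1\pm 2/\sqrt5$) to the precise constants $5\pm 2\sqrt5$ in the statement. The one conceptual point worth isolating is the identity $s_{n-1}=q_n$, which cleanly decouples the first-order system into a single second-order recurrence whose characteristic roots are visibly the $\tfrac{3\pm\sqrt5}{2}$ demanded by the formula.
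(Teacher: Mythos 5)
Your proposal is correct and follows the same route as the paper: the paper's proof is simply the remark that one expresses $p_n$ and $q_n$ in terms of $n$ from the recurrence \eqref{recurrence} and combines this with Lemma~\ref{enumeration}, which is exactly what you carry out (your reduction to the scalar recurrence $s_{n+1}=3s_n-s_{n-1}$ with $s_0=2$, $s_1=5$, and the resulting coefficients $A=\tfrac{5+2\sqrt5}{5}$, $B=\tfrac{5-2\sqrt5}{5}$ all check out). In effect you have supplied the computation the paper leaves implicit.
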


  \appendix
\section{Another Gröbner basis for the Plücker ideal} \label{appendix}  
In Section~\ref{sec: 2} we provided a lexicographic Gröbner basis for the Plücker ideal
$I_{L_n}$ associated with the finite poset $\Pi_n$.
In this appendix, we construct another lexicographic Gröbner basis for a
natural lexicographic ordering of the Plücker variables.
Before stating this result, we first prove a combinatorial lemma.

\smallskip

Richard Stanley recorded 214 combinatorial interpretations of the Catalan numbers in his monograph \cite{Stanley15}. The following lemma adds one more to this list.
\begin{Lemma}\label{lem: arcs_with_forbidden_patterns}
Let arcs $(a,b)$ be drawn on $n$ points $1,\dots,n$ placed on a horizontal line. Suppose the arrangement avoids the following patterns:
\begin{itemize}
  \item two disjoint arcs $(i,j)$ and $(k,l)$ with $1 \le i<j<k<l \le n$;
  \item three arcs $(i,k),(j,l),(k,m)$ with $1 \le i<j<k<l<m \le n$;
  \item three arcs $(i,l),(j,m),(k,s)$ with $1 \le i<j<k<l<m<s \le n$.
\end{itemize}
Then:
\begin{enumerate}
  \item every non-maximal arrangement extends to one with $2n-3$ arcs;
    \item the maximum number of arcs in such an arrangement is $2n-3$;
  \item the number of maximal arrangements equals the Catalan number $C_{n-2}$.
\end{enumerate}
\end{Lemma}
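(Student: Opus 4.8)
The plan is to read the valid arrangements as the faces of a simplicial complex $\Delta_n$ on the vertex set $\{(a,b):1\le a<b\le n\}$ whose minimal non-faces are precisely the three forbidden configurations, so that a maximal arrangement is a facet of $\Delta_n$. In this language assertions (1) and (2) together say that $\Delta_n$ is pure of dimension $2n-4$, while (3) says that $\Delta_n$ has $C_{n-2}$ facets. First I would settle (1) and (2) by a structural description of the facets, and then attack the enumeration in (3).

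The key observation is that avoiding the first configuration means that no two arcs are disjoint as closed intervals; hence the arcs of any face pairwise intersect, and by the one-dimensional Helly property their intervals share a common point. For a facet $S$ I would show that this common point is a single marked point. Indeed, if the arcs had two consecutive common points $\lambda,\lambda+1$, then every arc would span the gap between them; two arcs spanning the gap cross exactly when they are comparable in the product order on their endpoints, so avoiding the third configuration forces the chosen arcs to contain no chain of length three. By Mirsky's theorem such a face has at most twice a maximal antichain, that is at most $n$ arcs, which is below $2n-3$; so it is not a facet. Thus every facet has a unique common point $c\in\{2,\dots,n-1\}$. Recording an arc $(a,b)$ through $c$ by the lattice point $(c-a,\,b-c)$ identifies the arcs through $c$ with the punctured rectangle $R_c=\bigl([0,c-1]\times[0,n-c]\bigr)\setminus\{(0,0)\}$, and a direct check shows that the second and third forbidden configurations are together exactly the three-element antichains of $R_c$. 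Hence the faces with common point $c$ are the width $\le 2$ subsets of $R_c$; by the Greene--Kleitman theorem a maximal such subset is a union of two chains of maximal total size $2(c-1)+2(n-c)-1=2n-3$, and the purity of the complex of width $\le 2$ subsets gives that all facets through $c$ have this size. This yields (1) and (2).

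For (3) I would count facets according to their common point $c$. With $c$ fixed, the facets with common point $c$ are the maximal width $\le 2$ subsets of $R_c$; writing such a subset as a union of two non-crossing monotone chains and applying the Lindstr\"om--Gessel--Viennot lemma, I expect their number to be the Narayana number $N(n-2,c-1)=\tfrac{1}{n-2}\binom{n-2}{c-1}\binom{n-2}{c-2}$, in agreement with the distributions $1,3,1$ for $n=5$, $1,6,6,1$ for $n=6$, and $1,10,20,10,1$ for $n=7$. Summing over $c$ and invoking the classical identity $\sum_{k}N(n-2,k)=C_{n-2}$ then gives (3). As a cross-check, $C_{n-2}$ is exactly the degree of $\Gr(2,n)$ in its Pl\"ucker embedding, and the refinement asserts that the common point of a facet is distributed according to the Narayana numbers.

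I expect the main obstacle to be the enumerative heart of (3): proving that the maximal width $\le 2$ subsets of a rectangle are counted by the Narayana number, via the non-crossing two-path description, together with the companion fact that the width $\le 2$ complex is pure, which is what is used for (1). Both are purely order-theoretic statements about grid posets; once they are in place, the reduction via Helly and the common-point coordinates assembles all three assertions.
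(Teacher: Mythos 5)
Your translation of the problem is sound: reading arrangements as faces of a complex, and identifying the faces whose arcs pass through a fixed point $c$ with the width-$\le 2$ subsets of the punctured rectangle $R_c$, is correct (the check that $3$-antichains of $R_c$ are exactly the forbidden patterns through $c$ works, including the degenerate case where an arc ends at $c$, which produces the second pattern). This is genuinely different from the paper, which proves (2) by bounding the number of arcs of each length (at most one of length $n-1$, at most two of each smaller length) and proves (1) and (3) together by induction on $n$ via an explicit bijection with full binary trees. However, your argument has a real error and two load-bearing gaps.

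The error is in the reduction to a single common point. Two gap-spanning arcs that share an endpoint, say $(1,b)$ and $(1,b')$, are comparable in the product order yet nested, so ``cross exactly when comparable'' is false; crossing corresponds to \emph{strict} comparability in both coordinates. With the correct dictionary, the maximal antichains among arcs containing a fixed gap $[\lambda,\lambda+1]$ are the pairwise nested-or-endpoint-sharing families, which can have size $n-1$ (all arcs with left endpoint $1$ together with all arcs with right endpoint $\lambda+1$), not roughly $n/2$; Mirsky then only yields the bound $2(n-1)=2n-2$, which is \emph{not} below $2n-3$. The true maximum is $2n-4$, but proving it requires the chain-partition direction of Greene--Kleitman (e.g.\ intersecting with the diagonal chains of the grid), not Mirsky plus an antichain bound. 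Worse, even with the bound $2n-4$ in hand, the inference ``fewer than $2n-3$ arcs, hence not a facet'' conflates \emph{maximal} with \emph{maximum}: purity is exactly what you are trying to prove, so this step is circular. What actually closes it is an extension argument: if every arc of a face contains both $\lambda$ and $\lambda+1$, then the arc $(\lambda,\lambda+1)$ can always be added (a length-one arc is too short to occur in any forbidden pattern and meets every gap-spanning arc), and if it is already present one can add $(\lambda-1,\lambda)$ or a suitable arc when $\lambda=1$; hence such faces are never maximal.

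The two remaining gaps are the statements you defer: purity of the complex of width-$\le 2$ subsets of $R_c$, and the count of its maximal elements by the Narayana number $N(n-2,c-1)$. These are not citable standard facts --- purity of width-$\le 2$ complexes fails for general posets (take a $2$-chain disjoint from a $2$-antichain: the antichain alone is a maximal width-$\le 2$ subset of size $2$, while the chain plus one antichain element has size $3$) --- so a grid-specific proof is required, and together they constitute essentially the entire content of parts (1) and (3). Your numerical checks (distributions $1,3,1$ and $1,6,6,1$, summing to $C_{n-2}$) suggest the program is viable, but as it stands it is a plausible program rather than a proof, whereas the paper's induction is self-contained.
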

\begin{proof}
By the first forbidden pattern, no two arcs are disjoint, hence any two distinct arcs $(a,b)$ and $(c,d)$ either intersect $(a<c\leq b < d)$ or are nested $(a\leq c <  d \leq b)$.  

Define the length of an arc \((a,b)\) to be \(b-a\).
There is at most one arc of length \(n-1\), namely \((1,n)\).  
For any \(1 \le \ell \le n-2\), there are at most two arcs of length \(\ell\). Indeed, three distinct arcs of the same length cannot be nested and therefore must pairwise intersect, which necessarily creates one of the forbidden triples.
Thus the total number of arcs is at most $1 + 2(n-2) = 2n-3$, which proves~$(2)$. This bound is attained, for example, by the following arrangement 
$$\{(1,k), (2,l) \mid k \in \{2,\dots,n\}, \; l \in \{3,\dots,n\}\}.$$

We now prove (1) and (3) by induction on \(n\).  
For \(n=2\), the only allowed arrangements are the empty one and the one consisting of a single arc, so both statements hold. To prove (3), we construct a bijection with full binary trees with \(n-1\) leaves, equivalently, with \(n-2\) internal nodes. Their number is well-known to be the Catalan number $C_{n-2}$, see \cite{Stanley15}. For instance, for \(n=2\) the tree consists of a single node corresponding to the arc \((1,2)\).

Now let \(n\ge 3\). Observe that the arcs \((1,n)\), \((1,n-1)\), and \((2,n)\) can always be added to any allowed arrangement without creating a forbidden pattern. Thus, it suffices to consider arrangements on \(n\) points containing these three arcs.

Suppose first that all remaining arcs are nested inside a single arc of length \(n-2\), say \((1,n-1)\). Removing the vertex \(n\) together with the arcs \((1,n)\) and \((2,n)\) yields an allowed arrangement on \(n-1\) points, which by induction extends to an arrangement $\mathcal{A}'$ with \(2n-5\) arcs. Adding back the arcs \((1,n)\) and \((2,n)\) produces an allowed arrangement \(\mathcal A\) on \(n\) points with \(2n-3\) arcs, proving (1) in this case. By the induction hypothesis, \(\mathcal A'\) corresponds to a full binary tree rooted at the arc \((1,n-1)\). To construct the tree corresponding to \(\mathcal A\), we add a new root labeled by \((1,n)\), whose left child is the tree for \(\mathcal A'\) and whose right child is the leaf corresponding to \((2,n)\).

Now assume that not all arcs of length at most \(n-3\) are nested inside a single arc of length \(n-2\), say \((1,n-1)\). Let \((i_1,n),\dots,(i_k,n)\) be all arcs with right endpoint at~\(n\).
Replace each arc \((i_j,n)\) by \((i_j,n-1)\). If this coincides with an existing arc, we keep a single copy. Removing the vertex \(n\) and all arcs incident to it yields an arrangement on points \(1,\dots,n-1\). This squeezing operation does not create forbidden patterns, since all right endpoints move inward while preserving their relative order. Hence the resulting arrangement is allowed.
By induction, it extends to an allowed arrangement \(\mathcal B'\) with \(2n-5\) arcs. 

Let \((a,n-1)\) be the shortest arc with right endpoint \(n-1\) in \(\mathcal B'\).
We now reinsert the vertex \(n\) and replace every arc \((j,n-1)\) by \((j,n)\), this again avoids the three forbidden patterns. Finally, we add the two arcs \((1,n)\) and \((a,n-1)\). The resulting arrangement \(\mathcal B\) is allowed and has
$2n-3$
arcs, proving (1) in this case.

On the level of trees, \(\mathcal B'\) corresponds by induction to a full binary tree \(T'\) rooted at \((1,n-1)\) with left and right children $(1,n-2)$ and $(2,n-1)$, respectively. Let \((a,n-1)\) be the rightmost leaf of minimal length. Removing the subtree rooted at \((2,n-1)\) and identifying the right leaf of  \((1,n-1)\) with \((a,n-1)\) produces a tree \(T_1\).  From the removed subtree we form a new tree \(T_2\) by replacing \(n-1\) with \(n\) in every node label. The tree corresponding to \(\mathcal B\) is obtained by introducing a new root labeled \((1,n)\) with left child \(T_1\) and right child \(T_2\).

Finally, observe that for every internal node labeled by an arc \((a,b)\), its two children are uniquely determined by the maximal arcs of the form \((a,j)\) and \((i,b)\)  with $j \geq i$ nested inside it. The forbidden patterns guarantee both existence and uniqueness of this choice, ensuring that the construction indeed produces a full binary tree.
\end{proof}    
\begin{Example}
For $n=5$, there are exactly $C_3 = 5$ nested arc arrangements avoiding the
patterns from Lemma~\ref{lem: arcs_with_forbidden_patterns}.
The diagrams of these arc arrangements are shown in the first row of
Figure~\ref{fig: arcs}.
The corresponding full binary trees with four leaves are shown in the second
row of Figure~\ref{fig: arcs}.
Note that the first and the last two binary trees are obtained from a full
binary tree with three leaves by adding a new root together with an additional
child.
This corresponds to the first case in the induction step of the proof of
Lemma~\ref{lem: arcs_with_forbidden_patterns}.
The middle binary tree is obtained by the procedure described in the second
case of the proof.
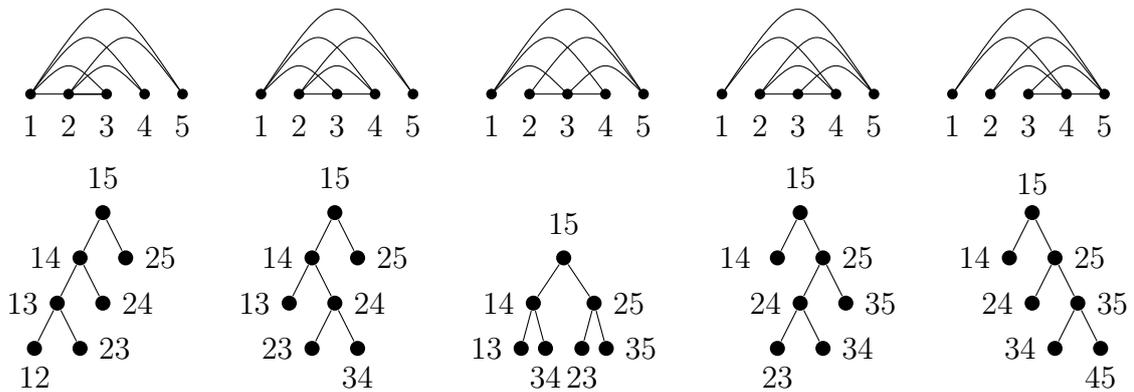
\begin{figure}[h]
\centering
\usetikzlibrary{arrows.meta,positioning}

\tikzset{
  treenode/.style = {circle, draw, minimum size=7mm, inner sep=1pt},
  level 1/.style={sibling distance=30mm},
  level 2/.style={sibling distance=18mm},
  level 3/.style={sibling distance=10mm},
}

\begin{tabular}{c c}

% ===== Row 1 =====
\begin{tikzpicture}[scale=0.5]
  % points
  \foreach \x in {1,...,5} {
    \fill (\x,0) circle (4pt);
    \node[below=4pt] at (\x,0) {$\x$};
  }

  % arcs
  \draw (2,0) .. controls (2.5,0) .. (3,0);
  \draw (1,0) .. controls (3.5,0) .. (2,0);
  \draw (1,0) .. controls (2.5,2.0) .. (4,0);
  \draw (2,0) .. controls (3.5,2.0) .. (5,0);
  \draw (1,0) .. controls (2,1.0) .. (3,0);
  \draw (2,0) .. controls (3,1.0) .. (4,0);
  \draw (1,0) .. controls (3,3.0) .. (5,0);
\end{tikzpicture}
\quad

\begin{tikzpicture}[scale=0.5]
  % points
  \foreach \x in {1,...,5} {
    \fill (\x,0) circle (4pt);
    \node[below=4pt] at (\x,0) {$\x$};
  }

  % arcs
  \draw (2,0) .. controls (2.5,0) .. (3,0);
  \draw (3,0) .. controls (3.5,0) .. (4,0);
  \draw (1,0) .. controls (2.5,2.0) .. (4,0);
  \draw (2,0) .. controls (3.5,2.0) .. (5,0);
  \draw (1,0) .. controls (2,1.0) .. (3,0);
  \draw (2,0) .. controls (3,1.0) .. (4,0);
  \draw (1,0) .. controls (3,3.0) .. (5,0);
\end{tikzpicture}
\quad
\begin{tikzpicture}[scale=0.5]
  % points
  \foreach \x in {1,...,5} {
    \fill (\x,0) circle (4pt);
    \node[below=4pt] at (\x,0) {$\x$};
  }

  % arcs
  \draw (2,0) .. controls (2.5,0) .. (3,0);
  \draw (3,0) .. controls (3.5,0) .. (4,0);
  \draw (1,0) .. controls (2.5,2.0) .. (4,0);
  \draw (2,0) .. controls (3.5,2.0) .. (5,0);
  \draw (1,0) .. controls (2,1.0) .. (3,0);
  \draw (3,0) .. controls (4,1.0) .. (5,0);
  \draw (1,0) .. controls (3,3.0) .. (5,0);
\end{tikzpicture}
\quad
\begin{tikzpicture}[scale=0.5]
  % points
  \foreach \x in {1,...,5} {
    \fill (\x,0) circle (4pt);
    \node[below=4pt] at (\x,0) {$\x$};
  }

  % arcs
  \draw (2,0) .. controls (2.5,0) .. (3,0);
  \draw (3,0) .. controls (3.5,0) .. (4,0);
  \draw (1,0) .. controls (2.5,2.0) .. (4,0);
  \draw (2,0) .. controls (3.5,2.0) .. (5,0);
  \draw (2,0) .. controls (3,1.0) .. (4,0);
  \draw (3,0) .. controls (4,1.0) .. (5,0);
  \draw (1,0) .. controls (3,3.0) .. (5,0);
\end{tikzpicture}
\quad
\begin{tikzpicture}[scale=0.5]
  % points
  \foreach \x in {1,...,5} {
    \fill (\x,0) circle (4pt);
    \node[below=4pt] at (\x,0) {$\x$};
  }

  % arcs
  \draw (4,0) .. controls (4.5,0) .. (5,0);
  \draw (3,0) .. controls (3.5,0) .. (4,0);
  \draw (1,0) .. controls (2.5,2.0) .. (4,0);
  \draw (2,0) .. controls (3.5,2.0) .. (5,0);
  \draw (2,0) .. controls (3,1.0) .. (4,0);
  \draw (3,0) .. controls (4,1.0) .. (5,0);
  \draw (1,0) .. controls (3,3.0) .. (5,0);
\end{tikzpicture}
\\
\begin{tikzpicture}[scale = 0.4,
  treenode/.style = {circle, fill, inner sep=2pt},
  edge from parent/.style = {draw},
  level 1/.style={sibling distance=15mm},
  level 2/.style={sibling distance=15mm},
  level 3/.style={sibling distance=15mm}
]

\node[treenode] (n15) {}
  child {
    node[treenode] (n14) {}
      child {
        node[treenode] (n13) {}
          child { node[treenode] (n12) {} }
          child { node[treenode] (n23) {} }
      }
      child { node[treenode] (n24) {} }
  }
  child { node[treenode] (n25) {} };

 \node[above=5pt] at (n15) {$15$};
\node[left=3pt] at (n14) {$14$};
\node[left=3pt] at (n13) {$13$};
\node[below=3pt] at (n12) {$12$};
\node[right=3pt] at (n23) {$23$};
\node[right=3pt] at (n24) {$24$};
\node[right=3pt] at (n25) {$25$};
\end{tikzpicture}
\quad
\begin{tikzpicture}[scale = 0.4,
  treenode/.style = {circle, fill, inner sep=2pt},
  edge from parent/.style = {draw},
  level 1/.style={sibling distance=15mm},
  level 2/.style={sibling distance=15mm},
  level 3/.style={sibling distance=15mm}
]

\node[treenode] (n15) {}
  child {
    node[treenode] (n14) {} 
      child { node[treenode] (n13) {} }
      child {
        node[treenode] (n24) {}
          child { node[treenode] (n23) {} }
          child { node[treenode] (n34) {} }
      }
  }
  child { node[treenode]{} };

\node[above=5pt] at (n15) {$15$};
\node[left=3pt] at (n14) {$14$};
\node[left=3pt] at (n13) {$13$};
\node[below=3pt] at (n34) {$34$};
\node[left=3pt] at (n23) {$23$};
\node[right=3pt] at (n24) {$24$};
\node[right=3pt] at (n25) {$25$};
\end{tikzpicture}
\quad
\begin{tikzpicture}[scale = 0.4,
  treenode/.style = {circle, fill, inner sep=2pt},
  edge from parent/.style = {draw},
  level 1/.style={sibling distance=20mm},
  level 2/.style={sibling distance=8mm},
  level 3/.style={sibling distance=15mm}
]

\node[treenode] (n15) {}
  child {
    node[treenode] (n14) {}
      child { node[treenode] (n13) {} }
      child { node[treenode] (n34) {} }
  }
  child {
    node[treenode] (n25) {}
      child { node[treenode] (n23) {} }
      child { node[treenode] (n35) {} }
  };

\node[above=5pt] at (n15) {$15$};
\node[left=3pt] at (n14) {$14$};
\node[left=3pt] at (n13) {$13$};
\node[below=3pt] at (n34) {$34$};
\node[below=3pt] at (n23) {$23$};
\node[right=3pt] at (n35) {$35$};
\node[right=3pt] at (n25) {$25$};
\end{tikzpicture}
\quad
\begin{tikzpicture}[scale = 0.4,
  treenode/.style = {circle, fill, inner sep=2pt},
  edge from parent/.style = {draw},
  level 1/.style={sibling distance=15mm},
  level 2/.style={sibling distance=15mm},
  level 3/.style={sibling distance=15mm}
]

\node[treenode] (n15) {}
  child { node[treenode] (14) {} }
  child {
    node[treenode] (n25) {}
      child {
        node[treenode] (n24) {}
          child { node[treenode] (n23) {} }
          child { node[treenode] (n34) {} }
      }
      child { node[treenode] (n35) {} }
  };
\node[above=5pt] at (n15) {$15$};
\node[left=3pt] at (n14) {$14$};
\node[left=3pt] at (n24) {$24$};
\node[right=3pt] at (n34) {$34$};
\node[below=3pt] at (n23) {$23$};
\node[right=3pt] at (n35) {$35$};
\node[right=3pt] at (n25) {$25$};
\end{tikzpicture}
\quad
\begin{tikzpicture}[scale = 0.4,
  treenode/.style = {circle, fill, inner sep=2pt},
  edge from parent/.style = {draw},
  level 1/.style={sibling distance=15mm},
  level 2/.style={sibling distance=15mm},
  level 3/.style={sibling distance=15mm}
]

\node[treenode] (n15) {}
child { node[treenode] (n14) {} }
  child {
    node[treenode] (n25) {}
     child { node[treenode] (n24) {} }
      child {
        node[treenode] (n35) {}
          child { node[treenode] (n34) {} }
          child { node[treenode] (n45) {} }
      }
  }
  ;

  \node[above=3pt] at (n15) {$15$};
\node[left=3pt] at (n14) {$14$};
\node[right=3pt] at (n25) {$25$};
\node[left=3pt] at (n24) {24};
\node[right=3pt] at (n35) {$35$};
\node[left=3pt] at (n34) {$34$};
\node[below=3pt] at (n45) {$45$};
\end{tikzpicture}

\end{tabular}

\caption{Nested arc diagrams without forbidden patterns from Lemma \ref{lem: arcs_with_forbidden_patterns} and the corresponding full binary trees for $n=5$.}
\label{fig: arcs}
\end{figure}
\end{Example}
Returning to Gröbner bases, let us order the Plücker variables lexicographically:
\begin{equation}\label{eq: lexorder}
 p_{12} > p_{13} > \dots > p_{1n}>p_{23}>\dots>p_{2n}>\dots > p_{n-2,n}>p_{n-1,n}.   
\end{equation}
\begin{Proposition}\label{prop: GBgcomplete_grapg}
    The following $n\choose 4$ quadrics and  ${n\choose 5} + {n\choose 6}$ cubics form a reduced lexicographic Gröbner basis of the Plücker ideal $I_{L_n}$ for the variable order~\eqref{eq: lexorder}.
    \begin{equation}\label{eq: GB_complete_graph}
        \begin{aligned}
       Q_{ijkl}:= \underline{p_{ij}p_{kl}} - p_{ik}p_{jl} + p_{il}p_{jk}, \quad &\text{ for } i<j<k<l,\\
       C^{(5)}_{ijklm}:= \underline{p_{ik}p_{jl}p_{km}} - p_{ik}p_{jm}p_{kl} - p_{il}p_{jk}p_{km} + p_{im}p_{jk}p_{kl}, \quad &\text{ for } i<j<k<l<m,\\
       C^{(6)}_{ijklms}:= \underline{p_{il}p_{jm}p_{ks}} - p_{il}p_{js}p_{km} - p_{im}p_{jk}p_{ls} + p_{is}p_{jk}p_{lm}, \quad &\text{ for } i<j<k<l<m<s.
    \end{aligned}
    \end{equation}
\end{Proposition}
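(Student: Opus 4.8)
The plan is to avoid a head-on Buchberger S-pair computation and instead pin down the initial ideal combinatorially, then force equality by a multiplicity argument built on Lemma~\ref{lem: arcs_with_forbidden_patterns}. First I would check that every polynomial in \eqref{eq: GB_complete_graph} lies in $I_{L_n}$ and carries the underlined leading monomial under the order \eqref{eq: lexorder}. The quadrics $Q_{ijkl}$ are the Plücker relations \eqref{quadratic}, and $p_{ij}p_{kl}$ is lex-largest because its largest variable $p_{ij}$ dominates $p_{ik}$ and $p_{il}$. For the cubics I would exhibit $C^{(5)}_{ijklm}$ and $C^{(6)}_{ijklms}$ as explicit $S$-linear combinations of Plücker quadrics (equivalently, verify that each vanishes under $p_{ab}\mapsto x_ay_b-x_by_a$) and confirm that $p_{ik}p_{jl}p_{km}$ and $p_{il}p_{jm}p_{ks}$ are the lex-leading monomials. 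Writing $\mathcal{G}$ for this set, this yields $\mathcal{G}\subseteq I_{L_n}$ and hence the inclusion $J:=\ini_{<}(\mathcal{G})\subseteq \ini_{<}(I_{L_n})$, where $J$ is the monomial ideal generated by the three families of leading terms.

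The decisive observation is that all three leading monomials are squarefree, so $S/J$ is the Stanley--Reisner ring $\mathbb{K}[\Delta]$ of the simplicial complex $\Delta$ whose faces are exactly the arc sets avoiding the three forbidden patterns of Lemma~\ref{lem: arcs_with_forbidden_patterns}, under the dictionary $p_{ab}\leftrightarrow(a,b)$ that matches the disjoint pair with $p_{ij}p_{kl}$ and the two triples with $p_{ik}p_{jl}p_{km}$ and $p_{il}p_{jm}p_{ks}$. By parts $(1)$ and $(2)$ of that lemma every maximal allowed arrangement has exactly $2n-3$ arcs, so $\Delta$ is pure of dimension $2n-4$ and $\dim S/J=2n-3$; by part $(3)$ it has $C_{n-2}$ facets, so the multiplicity of $\mathbb{K}[\Delta]$ is $C_{n-2}$.

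To promote the inclusion to an equality I would use that Gröbner degeneration preserves the Hilbert function: $S/\ini_{<}(I_{L_n})$ has the same dimension $2n-3$ and the same multiplicity as $\Rc_{\mathbb{K}}[L_n]$, namely the classical degree $\deg\Gr(2,n)=C_{n-2}$. The surjection $S/J\twoheadrightarrow S/\ini_{<}(I_{L_n})$ thus links two graded rings of equal dimension and equal multiplicity. Since $J$ is radical, $S/J$ has no embedded primes, and purity of $\Delta$ makes it unmixed of dimension $2n-3$, so every nonzero submodule has dimension $2n-3$ and positive multiplicity. Additivity of multiplicity along the short exact sequence of the surjection then shows that a nonzero kernel would give $C_{n-2}=e(S/J)>e(S/\ini_{<}(I_{L_n}))=C_{n-2}$, which is absurd; hence the kernel vanishes and $J=\ini_{<}(I_{L_n})$, i.e.\ $\mathcal{G}$ is a Gröbner basis. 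Reducedness is then a short finite check: every leading coefficient is $1$, and no trailing monomial of a quadric or cubic is divisible by any of the three leading monomials (e.g.\ the trailing quadric terms $p_{ik}p_{jl}$ and $p_{il}p_{jk}$ encode a crossing and a nesting, never a disjoint pair).

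I expect the main obstacle to be Step~1 for the cubics: producing the explicit certificates that $C^{(5)}_{ijklm}$ and $C^{(6)}_{ijklms}$ belong to $I_{L_n}$ with the asserted leading terms, since these are the genuinely new generators and the entire argument rests on the inclusion $\mathcal{G}\subseteq I_{L_n}$. A secondary but nontrivial point is making the face-to-arrangement dictionary precise, so that faces of $\Delta$ correspond bijectively to the arrangements of Lemma~\ref{lem: arcs_with_forbidden_patterns} and both purity and the facet count $C_{n-2}$ transfer cleanly to $S/J$.
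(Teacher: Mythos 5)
Your proposal is correct and follows essentially the same route as the paper: the paper likewise establishes that the listed quadrics and cubics lie in $I_{L_n}$ (deriving the cubics as Buchberger $S$-pairs of the quadrics) and then concludes by comparing the equidimensionality, dimension $2n-3$, and degree $C_{n-2}$ of the squarefree monomial ideal of underlined leading terms---computed exactly as you do, through Lemma~\ref{lem: arcs_with_forbidden_patterns} and the Stanley--Reisner correspondence (Proposition~\ref{prop: StanleyReisner_of_Kn})---with the same invariants of the Pl\"ucker ideal. The only difference is that for the final step (``a subideal of the initial ideal with equal dimension, degree, and no embedded primes must be the whole initial ideal'') the paper cites \cite[Lemma~3.1]{CHM23}, whereas you prove this inline via the multiplicity-additivity argument.
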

We will need the following proposition.
\begin{Proposition}\label{prop: StanleyReisner_of_Kn}
Let $M_{L_n}$ be the square-free monomial ideal generated by the ${n\choose 4}+ {n\choose 5} + {n\choose 6}$ underlined monomials in~\eqref{eq: GB_complete_graph}. Then we have
\begin{enumerate}
  \item $M_{L_n}$ is equidimensional;
  \item $\dim(M_{L_n}) = 2n-3$;
  \item $\deg(M_{L_n}) = C_{n-2}$.
\end{enumerate}
\end{Proposition}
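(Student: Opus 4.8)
The plan is to recognize $M_{L_n}$ as the Stanley--Reisner ideal of the simplicial complex whose faces are exactly the admissible arc arrangements of Lemma~\ref{lem: arcs_with_forbidden_patterns}, and then to read off the three assertions from the three parts of that lemma (here $\dim(M_{L_n})$ and $\deg(M_{L_n})$ mean $\dim(S/M_{L_n})$ and $\deg(S/M_{L_n})$). First I would regard each Plücker variable $p_{ij}$ as the arc $(i,j)$ on the points $1,\dots,n$, so that a squarefree monomial corresponds to an arc arrangement. The three families of underlined monomials in~\eqref{eq: GB_complete_graph}, namely $p_{ij}p_{kl}$ for $i<j<k<l$, then $p_{ik}p_{jl}p_{km}$ for $i<j<k<l<m$, and $p_{il}p_{jm}p_{ks}$ for $i<j<k<l<m<s$, are precisely the monomial encodings of the three forbidden patterns in that lemma. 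Since $M_{L_n}$ is a squarefree monomial ideal, it is the Stanley--Reisner ideal $I_\Delta$ of the simplicial complex $\Delta$ on the vertex set $\{p_{ij}:1\le i<j\le n\}$ in which a subset $F$ is a face if and only if $\prod_{p\in F}p\notin M_{L_n}$, equivalently if and only if the corresponding arc arrangement avoids all three forbidden patterns. Thus the faces of $\Delta$ are exactly the admissible arrangements and its facets are the maximal ones.

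With this dictionary, $(2)$ and $(1)$ are immediate. For $(2)$, the Krull dimension of $S/M_{L_n}$ equals $\dim\Delta+1$, i.e.\ the maximum cardinality of a face of $\Delta$; by part~$(2)$ of Lemma~\ref{lem: arcs_with_forbidden_patterns} this is $2n-3$. For $(1)$, part~$(1)$ of the lemma shows that every admissible arrangement is contained in an admissible arrangement with $2n-3$ arcs, so every facet of $\Delta$ has cardinality $2n-3$; that is, $\Delta$ is pure. The minimal primes of $S/I_\Delta$ are the ideals generated by the variables lying outside a facet, and these have a common height precisely when all facets have equal cardinality, so purity of $\Delta$ is exactly the equidimensionality of $S/M_{L_n}$, which is $(1)$.

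For $(3)$ I would use the standard multiplicity computation for Stanley--Reisner rings: writing $f_{i-1}$ for the number of faces of $\Delta$ with $i$ vertices, the Hilbert series of $S/I_\Delta$ is $\sum_{i\ge 0} f_{i-1}\,t^i/(1-t)^i$, so $\deg(S/I_\Delta)=f_{d-1}$ is the number of top-dimensional faces, where $d=\dim(S/I_\Delta)$. Since $\Delta$ is pure of dimension $d-1=2n-4$ by $(1)$ and $(2)$, this count equals the number of facets of $\Delta$, i.e.\ the number of maximal admissible arrangements; by part~$(3)$ of Lemma~\ref{lem: arcs_with_forbidden_patterns} this number is $C_{n-2}$, so $\deg(M_{L_n})=C_{n-2}$.

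All the combinatorial content is already carried by Lemma~\ref{lem: arcs_with_forbidden_patterns}, so the only genuine work is the bookkeeping of the first paragraph: verifying that each underlined leading term matches its forbidden pattern under the $p_{ij}\leftrightarrow(i,j)$ dictionary with the index inequalities lined up exactly, and noting that these monomials are indeed the minimal non-faces of $\Delta$ (they are squarefree, the two quadric arcs are disjoint while any two arcs inside a cubic generator cross or share an endpoint, so no generator divides another). I do not expect any serious obstacle beyond this matching together with the correct recollection of the Stanley--Reisner facts used: dimension $=$ maximal face cardinality, purity $=$ equidimensionality, and multiplicity $=$ number of facets in the pure case.
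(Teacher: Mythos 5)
Your proposal is correct and is exactly the argument the paper intends: its one-line proof just cites the Stanley--Reisner correspondence together with Lemma~\ref{lem: arcs_with_forbidden_patterns}, and you have spelled out precisely that dictionary (underlined monomials $\leftrightarrow$ forbidden patterns, faces $\leftrightarrow$ admissible arrangements) and the standard facts (dimension $=$ maximal face cardinality, purity $=$ equidimensionality, multiplicity $=$ number of facets in the pure case) that make parts (1)--(3) follow from the three parts of the lemma.
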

\begin{proof}
This follows from the Stanley–Reisner correspondence and Lemma~\ref{lem: arcs_with_forbidden_patterns}.
\end{proof}
\begin{proof}[Proof of Proposition \ref{prop: GBgcomplete_grapg}]
The listed polynomials~\eqref{eq: GB_complete_graph} clearly generate $I_{L_n}$ since they contain all quadratic Plücker relations $Q_{ijkl}$, which minimally generate $I_{L_n}$. To prove that they form a Gröbner basis we apply Buchberger’s criterion. 
For the chosen lexicographic order~\eqref{eq: lexorder}, the underlined monomials in~\eqref{eq: GB_complete_graph} are indeed the leading terms. We now consider $S$-pairs of the quadrics $Q_{ijkl}$. Each quadric is determined by four indices $i<j<k<l$, so two distinct quadrics may share between zero and three indices.
\begin{itemize}
    \item If two quadrics share at most one index, or share two indices that lie in different Plücker variables of the leading terms, then their leading monomials are coprime. By Buchberger’s second criterion, the $S$-pair reduces immediately to zero.
    \item Suppose they share exactly two indices that occur together in one variable of the leading terms. Up to symmetry (note that $Q_{ijkl}$ is invariant under $(ik)(jl)$), we may assume that the last two indices coincide, so the quadrics are $Q_{ilms}$ and $Q_{jkms}$. Then
\[
S(Q_{ilms}, Q_{jkms}) 
= p_{jk}(-p_{im}p_{ls} + p_{is}p_{lm}) - p_{il}(-p_{jm}p_{ks} + p_{js}p_{km})
= C^{(6)}_{ijklms},
\]
and thus should be added to the generating set by Buchberger's algorithm.
\item Finally, if two quadrics share three indices, there are two possibilities up to a symmetry: the third common index is the smallest index of the corresponding variable $p$ or the biggest. These two cases both lead to $S$-pair of the form
 \begin{align*}
     C^{(5)}_{ijklm} & =  S(Q_{iklm}, Q_{jklm}) = p_{jk}(-p_{il}p_{km} + p_{im}p_{kl}) - p_{ik}(-p_{jl}p_{km} + p_{jm}p_{kl}) = \\
  & =    S(Q_{ijkm}, Q_{ijkl}) = p_{kl}(-p_{ik}p_{jm} + p_{im}p_{jk}) - p_{km}(-p_{ik}p_{jl} + p_{il}p_{jk}).
 \end{align*}
\end{itemize}
Before proving that this set is sufficient to form a Gröbner basis, we will reduce it.

\smallskip
\noindent
\textbf{Six-index cubics.}  
Consider $C^{(6)}_{ijklms} = S(Q_{ilms}, Q_{jkms})$ with $i<l<m<s$ and $j<k<m<s$.  
There are six possible relative orderings of $(i,j,l,k)$:
\begin{align*}
    i <j <k <l, & \quad j <i <k <l,\\
    i <j <l <k, & \quad j <i <l <k,\\
    i <l <j <k, & \quad j< k <i <l.
\end{align*}
In the first three orderings, the leading term of $C^{(6)}_{ijklms}$ is $p_{il}p_{jm}p_{ks}$; in the last three, it is $p_{im}p_{jk}p_{ls}$. In all cases except the two patterns $(ijkl)$ and $(jilk)$, the leading term is divisible by a quadratic leading term $p_{ab}p_{cd}$ with $a<b<c<d$, so it reduces with respect to some $Q_{abcd}$. Moreover, after complete reduction with respect to~\eqref{eq: GB_complete_graph}, one obtains zero. For instance, when $i<j<l<k$, we have
\[
C_{ijklms}^{(6)} - p_{jm}Q_{ilks} + p_{js}Q_{ilkm} - C_{ijlkms}^{(6)} - p_{im}Q_{jlks} + p_{is}Q_{jlkm} = 0,
\]
where $C^{(6)}_{ijlkms}$ belongs to the set~\eqref{eq: GB_complete_graph} since its indices are ordered increasingly.  Therefore the only non-redundant cases are $(ijkl)$ and $(jilk)$.
Since the cubics $C^{(6)}_{ijklms}$ are invariant under the permutation $(ij)(kl)$, we may restrict to those with $i<j<k<l$, which are precisely the ones listed in~\eqref{eq: GB_complete_graph}.  

\smallskip
\noindent
\textbf{Five-index cubics.}  
Next, consider
\[
C^{(5)}_{ijklm} = S(Q_{iklm}, Q_{jklm}) = S(Q_{ijkm}, Q_{ijkl}).
\]
If obtained from the first $S$-pair, then $i<k<l<m$ and $j<k<l<m$, leaving two possibilities: $i<j$ or $j<i$.  
If obtained from the second $S$-pair, the only freedom is swapping $l$ and $m$. However, these cubics satisfy the relations
\[
C^{(5)}_{ijklm} = - C^{(5)}_{jiklm} = - C^{(5)}_{ijkml},
\]
so they are invariant with respect to such simple reflections up to a sign. Therefore, it suffices to include $C^{(5)}_{ijklm}$ with $i<j<k<l<m$ in the Gröbner basis.  

We now pass to the monomial ideal denerated by the underlined monomials in \eqref{eq: GB_complete_graph}. This is exactly the ideal $M_{L_n}$ from Proposition \ref{prop: StanleyReisner_of_Kn}, so it is equidimensional, and its dimension and degree agree with those of the Plücker ideal $I_{L_n}$. Therefore, it follows from \cite[Lemma~3.1]{CHM23} that the polynomials in~\eqref{eq: GB_complete_graph} form a Gröbner basis for $I_{L_n}$ with respect to the lexicographic order~\eqref{eq: lexorder}.
\end{proof}

The order~\eqref{eq: lexorder} is an elimination order. 
Although the Gröbner basis itself is not quadratic, we can nevertheless show
that the corresponding elimination ideals are quadratically generated.
More precisely, we may delete \emph{any} initial segment of Plücker variables
with respect to the order~\eqref{eq: lexorder} and still obtain a
quadratically generated ideal.

We adopt the graph conventions from Section~\ref{sec: 3} and denote
by $I_G$ the elimination ideal
$
I_G = I_{L_n} \cap \mathbb{K}[p_{ij} : \{i,j\} \in E(G)].
$

\begin{Definition}
A graph $G$ is \emph{compatible with the elimination order~\eqref{eq: lexorder}} if for some $n \geq k > j \geq 1$ its complement $G^c$ is isomorphic to a graph with edges
\[
\{1,2\},\{1,3\},\dots,\{j,j+1\},\dots,\{j,k\}.
\]
\end{Definition}
\begin{Corollary}
If $G$ is compatible with the elimination order~\eqref{eq: lexorder}, then $I_G$ is generated by quadratic Plücker relations
\begin{equation}\label{eq: quadrics_from_edges}
    \underline{p_{ij}p_{kl}} - p_{ik}p_{jl} + p_{il}p_{jk}, 
    \quad i<j<k<l,\;\; \{ij,kl,ik,jl,il,jk\}\subseteq E(G).
\end{equation}
\end{Corollary}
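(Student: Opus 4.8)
The plan is to combine the Gröbner basis of Proposition~\ref{prop: GBgcomplete_grapg} with the elimination property of Gröbner bases. Since~\eqref{eq: lexorder} is a pure lexicographic order, it is an elimination order for every initial segment of largest variables, and deleting such an initial segment is exactly what passing from $I_{L_n}$ to $I_G$ accomplishes when $G$ is compatible. I would first assume that $G^c$ is literally the graph with edges $\{1,2\},\{1,3\},\dots,\{j,k\}$; the general case, in which $G^c$ is only isomorphic to such a graph, then follows by relabeling the ground set $[n]$, under which $I_{L_n}$ and the set of quadratic Plücker relations are preserved. Then $\mathbb{K}[p_{ab}:\{a,b\}\in E(G)]$ is obtained from $S$ by deleting the initial segment $p_{12},\dots,p_{jk}$, so by \cite[Theorem~1.4.1]{GB2013} together with Proposition~\ref{prop: GBgcomplete_grapg} a Gröbner basis of $I_G$ consists precisely of those polynomials in~\eqref{eq: GB_complete_graph} all of whose variables $p_{ab}$ satisfy $\{a,b\}\in E(G)$. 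The surviving quadrics are exactly the relations listed in~\eqref{eq: quadrics_from_edges}, so the whole statement reduces to showing that every surviving cubic $C^{(5)}$ or $C^{(6)}$ already lies in the ideal generated by the surviving quadrics.

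Next I would isolate the combinatorial feature of $E(G)$ that drives the argument. With $G^c$ ending at $\{j,k\}$ (so $k>j\geq 1$), an edge $\{a,b\}$ with $a<b$ lies in $E(G)$ if and only if $a>j$, or $a=j$ and $b>k$. In particular $E(G)$ is \emph{closed under raising the smaller endpoint}: if $\{a,b\}\in E(G)$ and $a<a'<b$, then $\{a',b\}\in E(G)$, since $a'>a\geq j$ forces $a'>j$.

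Finally I would reduce the cubics using the $S$-polynomial identities already computed in the proof of Proposition~\ref{prop: GBgcomplete_grapg}, namely
\[
C^{(5)}_{ijklm}=p_{jk}\,Q_{iklm}-p_{ik}\,Q_{jklm},\qquad
C^{(6)}_{ijklms}=p_{jk}\,Q_{ilms}-p_{il}\,Q_{jkms}.
\]
The key observation is that the only variable occurring in the two quadrics on the right but not in the cubic on the left is $p_{lm}$ for $C^{(5)}$, respectively $p_{ms}$ for $C^{(6)}$. Suppose a cubic survives, i.e.\ all its variables are edges of $G$. Applying the closure property to the cubic variable $p_{km}$ (with $k<l<m$) yields $\{l,m\}\in E(G)$, and applying it to the cubic variable $p_{ks}$ (with $k<m<s$) yields $\{m,s\}\in E(G)$. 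A direct inspection then shows that \emph{every} remaining variable of each quadric $Q_{iklm},Q_{jklm}$ (resp.\ $Q_{ilms},Q_{jkms}$) is itself one of the cubic variables, hence an edge; thus both quadrics also survive. Consequently each surviving cubic is an $S$-linear combination of surviving quadrics and lies in the ideal they generate. Together with the inclusion $\eqref{eq: quadrics_from_edges}\subseteq I_G$ (the listed quadrics are Plücker relations supported on $E(G)$), this shows that $I_G$ is generated by the quadrics~\eqref{eq: quadrics_from_edges}.

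The step I expect to be the crux is the verification in the last paragraph that the extra variable ($p_{lm}$ or $p_{ms}$) is forced to be an edge of $G$: this is precisely where compatibility with~\eqref{eq: lexorder}, encoded in the closure property of $E(G)$, is essential, and it is what rules out any genuinely cubic generator of $I_G$. Everything else is the bookkeeping check that the remaining quadric variables already occur among the cubic variables.
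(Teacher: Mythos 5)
Your proposal is correct and follows essentially the same route as the paper's proof: apply the Elimination Theorem to the Gröbner basis of Proposition~\ref{prop: GBgcomplete_grapg}, then kill the surviving cubics via the identities $C^{(5)}_{ijklm}=p_{jk}Q_{iklm}-p_{ik}Q_{jklm}$ and $C^{(6)}_{ijklms}=p_{jk}Q_{ilms}-p_{il}Q_{jkms}$, using compatibility to force the single extra variable ($p_{lm}$, resp.\ $p_{ms}$) to be an edge of $G$. Your write-up is in fact slightly more careful than the paper's (explicit relabeling to handle the ``isomorphic'' clause, and explicit verification that both quadrics in each identity survive), but these are refinements of the same argument, not a different one.
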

\begin{proof}
By the Elimination Theorem \cite[Theorem 1.4.1]{GB2013}, the Gröbner basis for $I_G$ with respect to the lexicographic order~\eqref{eq: lexorder} consists of those quadrics and cubics from~\eqref{eq: GB_complete_graph} that involve only variables $p_{ij}$ with $\{i,j\}\in E(G)$.  

Any cubic $C^{(5)}_{ijklm}$ is a relation among edges of a complete graph $K_5$ without two edges indicated in the left diagram of Figure~\ref{fig: graphs for cubics}. Such a cubic can be generated by the quadrics $Q_{iklm}$ and $Q_{jklm}$, provided that $\{l,m\}\in E(G)$. For compatible graphs this is always the case: if $\{i,k\}\in E(G)$, then every edge smaller than $\{i,k\}$ in the order~\eqref{eq: lexorder} also lies in $E(G)$, in particular $\{l,m\}$ for $i<k<l<m$.  

The same reasoning applies to any cubic $C^{(6)}_{ijklms}$. If it lies in the ring $\KK[p_{ij}:\{i,j\}\in E(G)]$ for a compatible graph $G$, then it belongs to the ideal generated by the quadrics~\eqref{eq: quadrics_from_edges}, since it is generated by $Q_{ilms}$ and $Q_{jkms}$. Here the additional edge $\{m,s\}$ for the graph appearing in the right diagram of Figure~\ref{fig: graphs for cubics} is again guaranteed to belong to $E(G)$ as soon as $\{i,l\}\in E(G)$ for $i<l<m<s$.  
    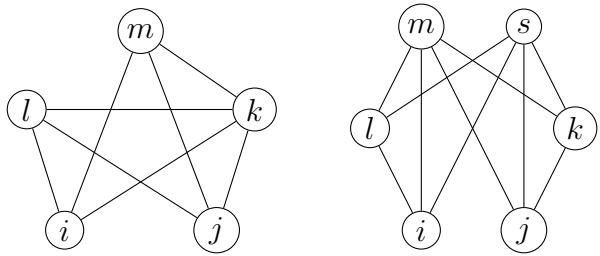
\begin{figure}
        \centering
        \begin{tikzpicture}[scale=2, every node/.style={circle, draw, inner sep=2pt}]
    % vertices
    \node (i) at (0,0) {$i$};
    \node (j) at (1,0) {$j$};
    \node (k) at (1.25,0.8) {$k$};
    \node (l) at (-0.25,0.8) {$l$};
    \node (m) at (0.5,1.32) {$m$};
    
    % edges
    \draw (i) -- (k);
    \draw (i) -- (l);
    \draw (i) -- (m);
    \draw (j) -- (k);
    \draw (j) -- (l);
    \draw (j) -- (m);
    \draw (k) -- (l);
    \draw (k) -- (m);
\end{tikzpicture}
\qquad
\begin{tikzpicture}[scale=0.45, every node/.style={circle, draw, inner sep=2pt}]
    % vertices
    \node (i) at (1.5,0) {$i$};
    \node (j) at (4.5,0) {$j$};
    \node (l) at (0,3) {$l$};
    \node (k) at (6,3) {$k$};
    \node (m) at (1.5,6) {$m$};
    \node (s) at (4.5,6) {$s$};
    
    % edges
    \draw (i) -- (s);
    \draw (i) -- (l);
    \draw (i) -- (m);
    \draw (j) -- (k);
    \draw (j) -- (s);
    \draw (j) -- (m);
    \draw (k) -- (s);
    \draw (k) -- (m);
    \draw (l) -- (s);
    \draw (l) -- (m);
\end{tikzpicture}
        \caption{Graphs supporting the cubic relations $C_{ijklm}^{(5)}$ and $C_{ijklms}^{(6)}$, respectively.}
        \label{fig: graphs for cubics}
    \end{figure}
\end{proof}

\section*{Acknowledgments}
\small{The research for this paper was initiated while the second author stayed at the Max Planck Institute for Mathematics in the Sciences, Leipzig, August 15 -- September 8, 2025. 

\medskip

The first author acknowledges support
from the European Research Council~\begin{footnotesize}(UNIVERSE PLUS, 101118787). 
$\!\!$ Views and opinions expressed
are however those of the authors only and do not necessarily reflect those of the European Union or the 
European
Research Council Executive Agency. Neither the European Union nor the granting authority
can be held responsible for them. \end{footnotesize}}

\end{document}